\newtheorem{thm}{Theorem}
\newtheorem{df}{Definition}
\newtheorem{lem}{Lemma}
\newtheorem{prop}{Proposition}
\newtheorem{cor}{Corollary}
\newenvironment{rmq}{\noindent{\bf Remark:} }{}
\newcommand{\Cov}{\mathtt{Cov}}
\title{\bf A Generalized Urn Model with \\Multiple Drawing \\and Random
Addition}
\author{Aguech Rafik.\\
\small Departement of Mathematics\\[-0.8ex]
\small King Saoudian University\\[-0.8ex]
\small Erryadh K.S.A.\\
\small\tt rafik.aguech@ipeit.rnu.tn\\
\and
Lasmar Nabil \\
\small Departement des Mathematiques \\[-0.8ex]
\small Institut Pr\'eparatoire aux \\[-0.8ex]
\small Etudes d'ing\'enieurs \\[-0.8ex]
\small Monastir, Tunisia\\
\small\tt nabillasmar@yahoo.fr\\
\and
Selmi Olfa\\
\small D\'epartement des Mathematiques\\[-0.8ex]
\small Facult\'e des Sciences de Monastir\\[-0.8ex]
\small Monastir, Tunisia\\
\small\tt selmiolfa3@yahoo.fr}
\begin{document}
\title{Unbalanced urn model with random addition }
\author{Rafik Aguech \thanks{D\'epartement des Math\'ematiques, King saoudian university,}
Nabil Lasmar\thanks{D\'epartement des Math\'ematiques, Institut
Pr\'eparatoire aux \'Etudes d'IngÃ©nieur, Monastir, Tunisie
(\texttt{nabillasmar@yahoo.fr}).}, and Olfa
Selmi\thanks{D\'epartement des Math\'ematiques, Universit\'e des
Sciences de Monastir, Monastir, Tunisia
(\texttt{selmiolfa3@yahoo.fr}).}} \maketitle

\begin{abstract}
 In this paper, we consider a multi-drawing urn model
with random addition. At each discrete time step, we draw a sample
of $m$ balls. According to the composition of the drawn colors, we
return the balls together with a random number of balls depending
on two discrete random variables $X$ and $Y$ with finite means and
variances. Via the stochastic approximation algorithm, we give
limit theorems describing the asymptotic behavior of white
balls.\\
\bigskip\noindent
\textbf{Keywords:} unbalanced urn, martingale, stochastic
algorithm, central limit theorem.
\end{abstract}

\section{Introduction}


The classical P\'olya urn was introduced by P\'olya and
Eggenberger \cite{Polya} describing  contagious diseases.
 The first model  is as
follows: An urn contains  balls of two colors at the start, white
and black. At each step, one picks  a ball randomly and returns it
to the urn with a ball of the same color.

Afterward this model was generalized  and it has become a simple
tool to describe several  models such finance, clinical trials
(see \cite{Pages}, \cite{Wei}), biology (see \cite{bio}), computer
sciences, internet (see
\cite{Mahmoud},\cite{Goldman}), etc.  \\
Recently, H. Mahmoud, M.R. Chen, C.Z Wei, M. kuba and H. Sulzbach
\cite{Kuba-Mahmoud-Panholzer,Chen-Kuba,Chen-Wei,kuba-Zulzbach,Kuba-mahmoud,Kuba-Mahmoud2},
have focused on the multidrawing urn. Instead of picking a ball,
one picks a sample of $m$ balls ($m\geq 1$), say $l$ white and
$m-l$ black balls. the pick is returned back to the urn together
with $a_{m-l}$ white and $b_{l}$ black balls, where $a_l$ and
$b_l, 0\leq l\leq m$ are integers. At first, they treated two
particular cases when
  \{$a_{m-l}=c\times l \quad \text{and}\quad
b_{m-l}=c\times (m-l)$\} and when \{$a_{m-l}=c\times (m-l)$
\quad\text{and}\quad $b_{m-l}=c\times l$\}, where $c$ is a
positive constant. By different methods as martingales and moment
methods, the authors described the asymptotic behavior of the urn
composition. When considering the general case and in order to
ensure the existence of a martingale, they
 supposed that $W_n$, the number of white balls in
the urn after $n$ draws, satisfies the affinity condition i.e,
there exists two deterministic sequences $(\alpha_n)$ and
$(\beta_n)$ such that, for all $n\geq 0$,
$\mathbb{E}[W_{n+1}|\mathcal{F}_n]=\alpha_n W_{n}+\beta_n$. Under
this condition, the authors focused on small and large index urns.
Later, the affinity condition was removed in the work of C.
Mailler, N. Lasmer and S. Olfa  \cite{C.N.O}, they generalized
this model and looked at the case of more than two
colors.\\

In the present paper, we deal with an unbalanced urn model, which
was not been sufficiently addressed in the literature. It was
mainly dealt with in the works of R. Aguech \cite{R.Aguech},  S.
Janson \cite{S. Janson} and H. Renlund \cite{Renlund1, Renlund2}.
In \cite{R.Aguech} and \cite{S. Janson}, the authors dealt with
model with a simple pick, whereas in \cite{Renlund1,Renlund2} the
author considered a model with two picks and, under some
conditions, they described the asymptotic behavior of the urn
composition.

In this paper, we aim to give a generalization of a recent work
\cite{A.L.O}. We deal with an unbalanced urn model with random
addition. We consider an urn containing two different colors white
and blue. We suppose that the urn is non empty at time 0. Let
denote by $W_n$ (resp $B_n$) the number of white balls (resp blue
balls) and by $T_n$ the total number of balls in the urn at time
$n$. Let $(X_n)_{n\geq 0}$ and $(Y_n)_{n\geq 0}$ be strictly
positive sequences of independent identically distributed discrete
random variables with finite means and variances. The model we
study is defined as follows: At a discrete time, we pick out a
sample of $m$ balls from the urn (we suppose that $T_0=W_0+B_0\geq
m$) and according to the composition of the sample, we return the
balls with $Q_n(\xi_n,m-\xi_n)^t$ balls, where $Q_n$ is a $2\times
2$ matrix depending on the variables $X_n$ and $Y_n$ and  $\xi_n$
is the number of white balls in the $n^{th}$ sample.

 Let $(\mathcal{F}_n)_{n\ge 0}$ be the $\sigma$-field
 generated by the first $n$ draws. We summarize
the evolution of
       the urn by
       the recurrence
\begin{equation}\label{recurrence}
  \begin{pmatrix}
    W_{n} \\
    B_{n} \\
  \end{pmatrix}\stackrel{\mathcal D}{=}
  \begin{pmatrix}
    W_{n-1} \\
    B_{n-1} \\
  \end{pmatrix}+Q_n
  \begin{pmatrix}
    \xi_{n} \\
  m-\xi_{n} \\
  \end{pmatrix}.
\end{equation}
 Note that, with these notations, we have
 \begin{equation*}\mathbb{P}[\xi_n=k|\mathcal{F}_{n-1}]=\displaystyle\frac{\binom {W_{n-1}} k \binom {B_{n-1}} {m-k}}{\binom {T_{n-1}} m}.\end{equation*}

 The paper is organized as follows. In Section \ref{Main results},
 we give the main results of the paper. In the first paragraph of Section \ref{Proofs}, we develop
 Theorem 1 \cite{Renlund1} and apply it to our urn model. The rest
 of this section is devoted to the prove the theorems.

 \textbf{Notation:} For a random variable $R$, we denote
 by $\mu_R=\mathbb{E}(R)$ and
 $\sigma_R^2=\mathbb{V}ar(X)$. Note that $\mu_X, \mu_Y ,\sigma^2_X$
 and $\sigma^2_Y$ are finite.\\

\section{Main Results}\label{Main results}

\begin{thm}\label{thmXopp}
Consider the urn model evolving by the matrix $Q_n=
\begin{pmatrix}
  0 & X_n \\
  X_n & 0 \\
\end{pmatrix}$. We have the following results:

\begin{enumerate}
    \item \begin{equation}\label{asymp-T_n}T_n\stackrel{a.s}{=}m\mu_X n
+o(\sqrt{n}\ \ln(n)^\delta),\end{equation} \begin{equation}
W_n\stackrel{a.s}{=}\frac{m\mu_X }{2}n+o(\sqrt{n}\
\ln(n)^{\delta})\quad \text{and}\quad
B_n\stackrel{a.s}{=}\frac{m\mu_X }{2}n+o(\sqrt{n}\
\ln(n)^{\delta});\quad\delta>\frac{1}{2}.\end{equation}

    \item \begin{equation}\frac{W_n-\frac{1}{2}T_n}{\sqrt{n}}
\stackrel{\mathcal{L}}{\longrightarrow}\mathcal{N}\Big(0,\frac{m(\sigma_X^2+\mu_X^2)}{12}\Big).\end{equation}
    \item \begin{equation}\frac{W_n-\mathbb{E}(W_n)}{\sqrt{n}}\stackrel{\mathcal{L}}{\longrightarrow}
\mathcal{N}\Big(0,\frac{m(\sigma_X^2+\mu_X^2)+m^2\sigma^2_X}{12}\Big).\end{equation}
\end{enumerate}

\end{thm}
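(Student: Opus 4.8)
The plan is to reduce everything to the total $T_n$ and the fraction $Z_n:=W_n/T_n$, and to treat $Z_n$ by stochastic approximation.

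\emph{The total and the reduction.} For the given $Q_n$, \eqref{recurrence} reads $W_n=W_{n-1}+X_n(m-\xi_n)$, $B_n=B_{n-1}+X_n\xi_n$, so $T_n=T_{n-1}+mX_n$ and hence $T_n=T_0+m\sum_{k=1}^nX_k$. Thus the first display of (1) is the strong law plus the law of the iterated logarithm for an i.i.d.\ sum with finite variance: $T_n=m\mu_X n+o(\sqrt n\,(\ln n)^\delta)$ a.s.\ for every $\delta>0$. Everything else is extracted from $Z_n$.

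\emph{$Z_n$ as a stochastic approximation; parts (1) and (2).} Using $\mathbb{E}[\xi_n\mid\mathcal F_{n-1}]=mZ_{n-1}$ and the independence of $X_n$ from $\mathcal F_{n-1}$ and from $\xi_n$, I would expand $Z_n-Z_{n-1}=\frac{W_{n-1}+X_n(m-\xi_n)}{T_{n-1}+mX_n}-\frac{W_{n-1}}{T_{n-1}}$ to obtain
\[
\mathbb{E}[Z_n-Z_{n-1}\mid\mathcal F_{n-1}]=-2m\,\mathbb{E}\!\Bigl[\tfrac{X_n}{T_{n-1}+mX_n}\,\Big|\,\mathcal F_{n-1}\Bigr]\bigl(Z_{n-1}-\tfrac12\bigr),
\]
which exhibits $(Z_n)$ as a stochastic approximation scheme in the sense of \cite{Renlund1}: step $\gamma_n\asymp 1/T_{n-1}\asymp1/n$, drift $\propto(\tfrac12-Z_{n-1})$ with the unique attracting zero $z^\ast=\tfrac12$, and $\lim_n n\gamma_n=2$; the increments are of the order allowed by the (extended) hypotheses because $\mathbb{E}(X_1^2)<\infty$. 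The adapted form of Renlund's theorem then gives (i) $Z_n\to\tfrac12$ a.s., (ii) $Z_n-\tfrac12=o\bigl((\ln n)^\delta/\sqrt n\bigr)$ a.s.\ for $\delta>\tfrac12$, and (iii) $\sqrt n(Z_n-\tfrac12)\xrightarrow{\mathcal L}\mathcal N(0,v)$, with $v$ read off from the limiting conditional variance of the increment: since $X_n\perp(\mathcal F_{n-1},\xi_n)$ and $\mathrm{Var}(\xi_n\mid\mathcal F_{n-1})=mZ_{n-1}(1-Z_{n-1})\frac{T_{n-1}-m}{T_{n-1}-1}\to m/4$,
\[
n^{2}\,\mathrm{Var}(Z_n-Z_{n-1}\mid\mathcal F_{n-1})\ \longrightarrow\ \frac{\mathbb{E}(X_1^2)\cdot m/4}{(m\mu_X)^{2}}=\frac{\sigma_X^2+\mu_X^2}{4m\mu_X^2},
\]
and the Robbins--Monro variance formula yields $v=\dfrac{\sigma_X^2+\mu_X^2}{12\,m\mu_X^2}$. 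Parts (1) and (2) are then immediate: writing $W_n=\tfrac12T_n+T_n(Z_n-\tfrac12)$, the rate (ii) with the estimate on $T_n$ gives $|T_n(Z_n-\tfrac12)|=o(\sqrt n(\ln n)^\delta)$ a.s., hence the $W_n$ (and, via $B_n=T_n-W_n$, the $B_n$) assertion of (1); and $\frac{W_n-\frac12T_n}{\sqrt n}=\frac{T_n}{n}\cdot\sqrt n(Z_n-\tfrac12)$, so Slutsky's lemma and $T_n/n\to m\mu_X$ a.s.\ give (2) with variance $(m\mu_X)^2v=\tfrac{m(\sigma_X^2+\mu_X^2)}{12}$.

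\emph{Part (3), and the main obstacle.} A short computation from $\mathbb{E}[W_n-\tfrac12T_n\mid\mathcal F_{n-1}]=(1-m\mu_X/T_{n-1})(W_{n-1}-\tfrac12T_{n-1})$ and $\mathbb{E}(Z_n-\tfrac12)=O(1/n)$ shows $\mathbb{E}(W_n)=\tfrac{m\mu_X}{2}n+o(\sqrt n)$, so (3) is the CLT for $(W_n-\tfrac{m\mu_X}{2}n)/\sqrt n$. From $W_n=(1-m\mu_X/T_{n-1})W_{n-1}+m\mu_X+\eta_n$, $\eta_n:=W_n-\mathbb{E}[W_n\mid\mathcal F_{n-1}]$, subtracting the same recursion for $\mathbb{E}(W_n)$ and telescoping gives the \emph{exact} identity
\[
W_n-\mathbb{E}(W_n)=\sum_{k=1}^n\eta_k\ -\ m\mu_X\sum_{k=1}^n\bigl(Z_{k-1}-\mathbb{E}(Z_{k-1})\bigr).
\]
The first sum is a martingale with, using $X_k\perp(\mathcal F_{k-1},\xi_k)$,
\[
\mathbb{E}[\eta_k^2\mid\mathcal F_{k-1}]=(\sigma_X^2+\mu_X^2)\mathrm{Var}(\xi_k\mid\mathcal F_{k-1})+\sigma_X^2m^2\bigl(\tfrac{B_{k-1}}{T_{k-1}}\bigr)^{2}\ \longrightarrow\ \tfrac14\bigl(m(\sigma_X^2+\mu_X^2)+m^2\sigma_X^2\bigr),
\]
so (Lindeberg being automatic from $\mathrm{Var}(X_1)<\infty$ and $\xi_k\le m$) the martingale CLT gives $n^{-1/2}\sum_{k\le n}\eta_k\xrightarrow{\mathcal L}\mathcal N\bigl(0,\tfrac14(m(\sigma_X^2+\mu_X^2)+m^2\sigma_X^2)\bigr)$. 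The main obstacle is the second sum: by (2), $Z_{k-1}-\mathbb{E}(Z_{k-1})\asymp k^{-1/2}$, so $\sum_{k\le n}(Z_{k-1}-\mathbb{E}Z_{k-1})$ is itself of order $\sqrt n$ and is correlated with $\sum_k\eta_k$, so the two pieces cannot be added blindly. To finish I would (a) re-express $Z_{k-1}-\tfrac12$ through the integrating-factor martingale $a_n=\prod_{k\le n}(1-m\mu_X/T_{k-1})^{-1}\sim An$ — equivalently, work with the martingale $\widehat M_n:=a_nW_n-m\mu_X\sum_{k\le n}a_k$, so that $W_n=\widehat M_n/a_n+m\mu_X\sum_{k\le n}a_k/a_n$ and the random constant $A$ eventually cancels; (b) apply a bivariate martingale central limit theorem to identify the joint Gaussian limit of the two pieces; and (c) compute the limiting variance of $m\mu_X\,n^{-1/2}\sum_{k\le n}(Z_{k-1}-\mathbb{E}Z_{k-1})$ (where the stochastic-approximation constant $v$ re-enters through Ornstein--Uhlenbeck-type partial-sum identities) together with its asymptotic covariance with $n^{-1/2}\sum_k\eta_k$. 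Assembling these pieces with the computed covariance yields the announced limit $\mathcal N\bigl(0,\tfrac{m(\sigma_X^2+\mu_X^2)+m^2\sigma_X^2}{12}\bigr)$. Making this covariance bookkeeping precise — i.e.\ tracking how the P\'olya-type rebalancing mixes and damps the fluctuations of the draws and those of the random additions — is the technical heart of part (3).
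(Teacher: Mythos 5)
For parts (1) and (2) your argument is essentially the paper's: cast $Z_n=W_n/T_n$ as a Renlund-type stochastic approximation with step $\gamma_n\asymp 1/T_n$ and drift proportional to $\tfrac12-Z_{n-1}$, deduce $Z_n\to\tfrac12$ a.s.\ from Theorem \ref{th:renlund}, get the CLT for $\sqrt n(Z_n-\tfrac12)$ from Theorem \ref{clt-renlund} with variance $\frac{\sigma_X^2+\mu_X^2}{12m\mu_X^2}$, and transfer to $W_n$ via $T_n=T_0+m\sum X_k$ and Slutsky. Your numerical constants ($\hat\gamma=2$, $\sigma^2=\frac{\sigma_X^2+\mu_X^2}{4m\mu_X^2}$) agree with the paper's. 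One caveat, which you share with (indeed, handle slightly more honestly than) the paper: the error term $o(\sqrt n\,(\ln n)^\delta)$ in claim (1) requires an almost-sure \emph{rate} for $Z_n-\tfrac12$, which Theorem \ref{th:renlund} as stated does not deliver; you assert it as part of "the adapted form of Renlund's theorem," so if you keep this route you must quote or prove the LIL-type rate explicitly.

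Part (3) is where you genuinely diverge from the paper, and where the gap is. The paper proves claim (3) by (a) computing $\mathbb{V}ar(W_n)$ directly from the one-step recursion, obtaining $\mathbb{V}ar(W_{n+1})=\bigl(1-\tfrac2n+o(\cdot)\bigr)\mathbb{V}ar(W_n)+\tfrac{C}{4}+o(\cdot)$ with $C=m(\sigma_X^2+\mu_X^2)+m^2\sigma_X^2$, whose solution is $\tfrac{C}{12}n+o(\sqrt n(\ln n)^\delta)$ --- the factor $\tfrac13$ relative to $C/4$ falls out of solving this recursion --- and (b) establishing asymptotic normality by showing that the increments $X_n(m-\xi_n)$ are $\alpha$-mixing and applying Bernstein's blocking method as in \cite{A.L.O}. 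Your exact identity $W_n-\mathbb{E}(W_n)=\sum_{k\le n}\eta_k-m\mu_X\sum_{k\le n}(Z_{k-1}-\mathbb{E} Z_{k-1})$ is correct, and your computation that the martingale part has limiting conditional variance $C/4$ is also correct; but the entire content of claim (3) is precisely the passage from $C/4$ to $C/12$, i.e.\ the damping produced by the negative correlation between the two sums, and this is exactly the step you list as items (a)--(c) and do not carry out. As written, your proposal proves a CLT for $\sum\eta_k$ and \emph{asserts} the effect of the correction term; that is a plausible programme (and arguably cleaner than the mixing argument), but it is not yet a proof. The most economical way to close it within your framework is to import the paper's variance recursion to pin down $\mathbb{V}ar(W_n)\sim Cn/12$, which simultaneously identifies the missing covariance in your bivariate-martingale step; alternatively, solve your own decomposition by passing to the integrating-factor martingale $a_nW_n-m\mu_X\sum_{k\le n}a_k$ and computing its bracket, which reproduces the same $1/3$.
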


\begin{thm}\label{thmXself}
Consider the urn model evolving by the matrix $Q_n=
\begin{pmatrix}
  X_n & 0 \\
  0 & X_n \\
\end{pmatrix}$. There exists a positive random variable $\tilde W_\infty$, such that
\begin{equation}T_n\stackrel{a.s}{=}m\mu_X n
+o(\sqrt{n}\ \ln(n)^\delta),\quad W_n\stackrel{a.s}{=}\tilde
W_\infty n +o(n) \quad\mbox{and}\;\;B_n\stackrel{a.s}{=}
(m\mu_X-\tilde W_\infty)n+o(n).
\end{equation}
\end{thm}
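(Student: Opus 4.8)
The plan is to route everything through the white proportion $Z_n := W_n/T_n$, which for this particular $Q_n$ turns out to be a bona fide bounded martingale. For this choice of matrix, the recursion \eqref{recurrence} reads $W_n = W_{n-1}+X_n\xi_n$ and $B_n = B_{n-1}+X_n(m-\xi_n)$, so that $T_n = T_0 + m\sum_{k=1}^{n}X_k$ is an explicit sum of i.i.d.\ terms. The strong law of large numbers gives $T_n/n\to m\mu_X$ a.s.; and since $\sum_{k}\Var(X_k)\big/\big(\sqrt k\,\ln(k)^{\delta}\big)^{2}<\infty$ for every $\delta>\tfrac12$ (this is exactly why $\delta>\tfrac12$ is imposed), Kolmogorov's rate criterion together with Kronecker's lemma, or the Hartman--Wintner law of the iterated logarithm since $\sigma_X^2<\infty$, upgrades this to $T_n = m\mu_X n + o\big(\sqrt n\,\ln(n)^{\delta}\big)$ a.s., which is the first assertion of the theorem.

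For the proportion, a one-line manipulation of \eqref{recurrence} gives the convex-combination identity $Z_n = (1-\rho_n)Z_{n-1} + \rho_n\,\xi_n/m$ with $\rho_n := mX_n/(T_{n-1}+mX_n)\in(0,1)$, equivalently $Z_n - Z_{n-1} = \rho_n\big(\xi_n/m - Z_{n-1}\big)$. Conditionally on $\mathcal F_{n-1}$ the fresh variable $X_n$ (hence $\rho_n$) is independent of the drawn count $\xi_n$, and the hypergeometric mean gives $\mathbb E[\xi_n\mid\mathcal F_{n-1}] = mW_{n-1}/T_{n-1} = mZ_{n-1}$; combining the two yields $\mathbb E[Z_n\mid\mathcal F_{n-1}] = Z_{n-1}$. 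Since $0\le Z_n\le 1$, $(Z_n)$ is a bounded martingale, so $Z_n\to Z_\infty$ a.s.\ and in every $L^p$, with $\mathbb E[Z_\infty]=Z_0=W_0/T_0$. Multiplying by $T_n/n\to m\mu_X$ then gives $W_n/n = Z_n\cdot(T_n/n)\to m\mu_X Z_\infty$ a.s.; setting $\tilde W_\infty := m\mu_X Z_\infty$, we obtain $W_n = \tilde W_\infty n + o(n)$ and, subtracting from the asymptotics for $T_n$, $B_n = (m\mu_X-\tilde W_\infty)n+o(n)$ a.s., with $0\le\tilde W_\infty\le m\mu_X$.

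The remaining --- and, I expect, the hardest --- point is that $\tilde W_\infty$ is \emph{strictly} positive a.s.\ (assuming $W_0\ge 1$), equivalently $Z_\infty>0$ a.s.; the martingale argument alone only gives $\mathbb E[\tilde W_\infty]=m\mu_X W_0/T_0>0$. I would establish this through a negative-moment estimate: show that for some small $s\in(0,1)$ one has $\sup_n\mathbb E\big[Z_n^{-s}\big]<\infty$, whence by Fatou $\mathbb E\big[Z_\infty^{-s}\big]<\infty$ and therefore $Z_\infty>0$ a.s. Writing $Z_n/Z_{n-1} = \big(1+X_n\xi_n/W_{n-1}\big)\big/\big(1+mX_n/T_{n-1}\big)$ and using again the conditional independence of $X_n$ and $\xi_n$ together with the hypergeometric law of $\xi_n$, one expands $\mathbb E\big[(Z_n/Z_{n-1})^{-s}\mid\mathcal F_{n-1}\big]$ to second order. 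The subtlety is that the contribution $s\,\mathbb E[\log(1+mX_n/T_{n-1})\mid\mathcal F_{n-1}]\sim sm\mu_X/T_{n-1}$ coming from the steadily growing total is of the non-summable order $1/n$, so one cannot simply dominate $Z_n^{-s}$ by a supermartingale; instead, upon taking the full expectation $\mathbb E[Z_n^{-s}]-\mathbb E[Z_{n-1}^{-s}] = \mathbb E\big[Z_{n-1}^{-s}\big(\mathbb E[(Z_n/Z_{n-1})^{-s}\mid\mathcal F_{n-1}]-1\big)\big]$, this $1/n$ term must be shown to cancel against the $1/n$ gain produced by the self-reinforcement of the white balls on $\{\xi_n\ge 1\}$ --- an event of conditional probability of order $Z_{n-1}$ on which $Z_n$ increases by a factor $\ge(1-\rho_n)\big(1+X_n/W_{n-1}\big)$, where one uses crucially that $W_n$ is non-decreasing with $W_n\ge W_0\ge 1$. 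Carrying out this bookkeeping, so that the residual increment is of summable order, is where the bulk of the technical work lies; a truncation using $\mu_X,\sigma_X^2<\infty$ handles the large values of $X$ if it is unbounded.
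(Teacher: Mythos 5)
Your argument is correct for the stated asymptotics and takes a genuinely different route from the paper's. You exhibit the proportion $Z_n=W_n/T_n$ as a bounded martingale via the convex-combination identity $Z_n=(1-\rho_n)Z_{n-1}+\rho_n\xi_n/m$ together with the conditional independence of $X_n$ and $\xi_n$, then multiply by $T_n/n\to m\mu_X$. The paper instead normalizes the counts themselves: it shows that $\tilde M_n=\prod_{k=1}^{n-1}\bigl(T_k/(T_k+m\mu_X)\bigr)W_n$ and the analogous $\tilde N_n$ for $B_n$ are positive martingales (using $\mathbb{E}[W_{n+1}\mid\mathcal F_n]=W_n(1+m\mu_X/T_n)$), and then estimates $\prod_{k=1}^{n-1}T_k/(T_k+m\mu_X)\sim e^{\gamma}/n$ from the law of large numbers for $T_n$. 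The two are essentially equivalent in content; yours is slightly cleaner because boundedness of $Z_n$ gives convergence for free and no asymptotic control of a normalizing product is needed, while the paper's version hands it explicit martingales for $W_n$ and $B_n$ separately, which it reuses in the subsequent absolute-continuity discussion. Your derivation of $T_n=m\mu_X n+o(\sqrt n\,\ln(n)^{\delta})$ is also fine (either the iterated-logarithm law or Kolmogorov's criterion works; the paper invokes the Rajchman strong law, and incidentally misprints a factor $\tfrac12$ in its Equation (\ref{totalself})).

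On the strict positivity of $\tilde W_\infty$: you correctly identify that the martingale argument alone yields only $\tilde W_\infty\ge 0$ with $\mathbb{E}[\tilde W_\infty]>0$, and your negative-moment sketch is left as a plan rather than a proof. This is a genuine gap in your write-up, but it is not one the paper closes either: the paper's proof likewise only produces \emph{nonnegative} limit variables, and addresses the law of $\tilde W_\infty$ (via absolute continuity, which does give $\mathbb{P}(\tilde W_\infty=0)=0$) only under the additional hypothesis that $X$ is bounded. So on this point your proposal is no weaker than the paper, and your honesty in flagging it is appropriate; if you want to complete it, the bounded-$X$ case can be settled by adapting the paper's absolute-continuity lemma rather than by the negative-moment computation you outline.
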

\begin{rmq}
The random variable $\tilde W_\infty$ is absolutely continuous
whenever $X$ is bounded.
\end{rmq}

\begin{thm}\label{thmXYopp} Consider the urn model evolving by
the matrix $Q_n=
\begin{pmatrix}
  0 & X_n \\
  Y_n & 0 \\
\end{pmatrix}.$ Let $z:=\frac{\sqrt{\mu_X}}{{\sqrt{\mu_X}+\sqrt{\mu_Y}}}$, we
have the following results:

\begin{enumerate}

    \item \begin{equation}\label{SL-Total}T_n\stackrel{a.s}{=}m\sqrt{\mu_X}\sqrt{\mu_Y}\
    n+o(n),\end{equation}
    \begin{equation} W_n\stackrel{a.s}{=}m\sqrt{\mu_X}\sqrt{\mu_Y}\ z\  n+o(n)\quad\text{and}\quad B_n\stackrel{a.s}{=}m\sqrt{\mu_X}\sqrt{\mu_Y}(1-z)\ n+o(n).\end{equation}

    \item \begin{equation}\frac{W_n-z T_n}{\sqrt{n}}\stackrel{\mathcal{L}}{\longrightarrow}\mathcal{N}\Big(0,\frac{ G(z)}{3}\Big),\end{equation}

where, \begin{equation*} G(x)=\sum_{i=0}^4a_ix^i,\end{equation*}
with
\begin{eqnarray*}a_0=m^2(\sigma^2_X+\mu_X^2)&,&a_1=m(1-2m)(\sigma_X^2+\mu_X^2),\\
a_2=3m(m-1)(\sigma_X^2+\mu_X^2)-2m(m-1)\mu_X\mu_Y &,&
a_3=m\mathbb{E}(X-Y)^2-2(m^2-m)\bigl(\sigma_X^2+\mu_X^2-\mu_X\mu_Y\bigr)\\
\text{and}\quad a_4=m(m-1)\mathbb{E}(X-Y)^2.\end{eqnarray*}

\end{enumerate}
\end{thm}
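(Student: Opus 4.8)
\textbf{Proof plan for Theorem \ref{thmXYopp}.}
The plan is to treat the white-ball proportion $\rho_n := W_n/T_n$ as a stochastic approximation sequence and to invoke the framework of Renlund (Theorem 1 of \cite{Renlund1}), which, as announced in the introduction, is recalled and adapted to the present setting at the start of Section \ref{Proofs}. First I would write down the one-step dynamics from the recurrence \eqref{recurrence}: with $Q_n=\left(\begin{smallmatrix} 0 & X_n\\ Y_n & 0\end{smallmatrix}\right)$ we get $W_n = W_{n-1} + X_n(m-\xi_n)$, $B_n=B_{n-1}+Y_n\xi_n$, so $T_n = T_{n-1} + X_n(m-\xi_n)+Y_n\xi_n$. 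Conditioning on $\mathcal F_{n-1}$, the hypergeometric law of $\xi_n$ gives $\mathbb E[\xi_n\mid\mathcal F_{n-1}] = m\rho_{n-1}$ and, since $X_n,Y_n$ are independent of $\mathcal F_{n-1}$, I would compute $\mathbb E[W_n-W_{n-1}\mid\mathcal F_{n-1}] = m\mu_X(1-\rho_{n-1})$ and $\mathbb E[T_n-T_{n-1}\mid\mathcal F_{n-1}] = m\mu_X(1-\rho_{n-1}) + m\mu_Y\rho_{n-1}$. From $T_{n-1}=\Theta(n)$ (to be justified) this yields $\mathbb E[\rho_n-\rho_{n-1}\mid\mathcal F_{n-1}] = \frac1n\, h(\rho_{n-1}) + O(n^{-2})$ for a smooth drift $h$ whose unique zero in $[0,1]$ is $z$; indeed $h(x)=0 \iff \mu_X(1-x) = x\bigl(\mu_X(1-x)+\mu_Y x\bigr)$, and one checks the solution is exactly $z=\sqrt{\mu_X}/(\sqrt{\mu_X}+\sqrt{\mu_Y})$, with $h'(z)<0$.

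For part 1, once the stochastic-approximation form is in place, the general convergence theorem gives $\rho_n \to z$ almost surely. To get the rate $T_n \stackrel{a.s}{=} m\sqrt{\mu_X}\sqrt{\mu_Y}\, n + o(n)$, I would use that the normalized increment $\mathbb E[T_n - T_{n-1}\mid \mathcal F_{n-1}] = m\mu_X(1-\rho_{n-1}) + m\mu_Y\rho_{n-1} \to m\mu_X(1-z)+m\mu_Y z$, and a short computation shows this limit equals $m\sqrt{\mu_X\mu_Y}$ (again using the definition of $z$); a Cesàro/Toeplitz argument together with a martingale term that is $o(n)$ then gives $T_n/n \to m\sqrt{\mu_X\mu_Y}$ a.s. The statements for $W_n$ and $B_n$ follow from $W_n = \rho_n T_n$ and $B_n = T_n - W_n$.

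For part 2, I would apply the central limit theorem for stochastic approximation (the CLT part of Renlund's theorem): writing $\rho_n - z$ as a stochastic-approximation error, the fluctuations of $\sqrt n(\rho_n - z)$ — equivalently of $(W_n - zT_n)/\sqrt n$ after rescaling by $T_n/n$ — are asymptotically Gaussian, centered, with variance $\Sigma^2/(1-2\gamma)$ or $\Sigma^2/(2\gamma-1)$ type formula, where $\gamma = -h'(z)$ governs the stability and $\Sigma^2$ is the asymptotic conditional variance of the martingale increment. So the bulk of the work is to compute $\mathrm{Var}\bigl((W_n-zT_n) - (W_{n-1}-zT_{n-1})\mid \mathcal F_{n-1}\bigr)$ at $\rho_{n-1}=z$. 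This splits into three contributions: the hypergeometric variance of $\xi_n$ (which produces $\mathrm{Var}(\xi_n\mid\mathcal F_{n-1}) = m\rho(1-\rho)\frac{T_{n-1}-m}{T_{n-1}-1} \to m z(1-z)$ and is the source of the $\mathbb E(X-Y)^2$ terms since $W_n - zT_n$ moves by $\bigl((1-z)X_n + \cdots\bigr)$ times fluctuations of $\xi_n$ with a $-z(X_n+Y_n)$-type coefficient), the independent variances $\sigma_X^2,\sigma_Y^2$ of $X_n,Y_n$ weighted by $\mathbb E[(m-\xi_n)^2]$ and $\mathbb E[\xi_n^2]$, and cross terms; collecting everything as a polynomial in $z$ and matching constants should reproduce $G(z)/3$ with the stated coefficients $a_0,\dots,a_4$ (the factor $1/3$ absorbing $1/(1-2h'(z)\cdot(\text{const}))$ after the rescaling by the limit of $T_n/n$).

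The main obstacle I anticipate is purely computational rather than conceptual: assembling the conditional variance of the increment of $W_n - zT_n$ correctly, keeping careful track of the hypergeometric covariance structure between $\xi_n$ and $m-\xi_n$ and of the fact that $X_n$ multiplies $m-\xi_n$ while $Y_n$ multiplies $\xi_n$ (so the variable factors and the combinatorial factor are not independent of each other once inside the same term), and then simplifying the resulting degree-4 polynomial in $z$ — in particular verifying that the coefficient $a_4 = m(m-1)\mathbb E(X-Y)^2$ and $a_3$ come out as stated, which is where sign errors are most likely. A secondary point requiring care is checking the hypotheses of Renlund's theorem (boundedness of the rescaled increments, the $o(n^{-2})$ remainder in the drift, and $h'(z)$ in the admissible range for the $1/\sqrt n$ CLT scaling rather than a degenerate or heavy-tailed regime), which relies on the finiteness of $\mu_X,\mu_Y,\sigma_X^2,\sigma_Y^2$ and on the a.s. lower bound $T_n \ge T_0 \to \infty$ linearly established in part 1.
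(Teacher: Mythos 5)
Your plan is correct and follows essentially the same route as the paper: you cast $Z_n=W_n/T_n$ as a Renlund stochastic approximation with drift vanishing exactly at $z$ (where $\mu_X(1-z)^2=\mu_Yz^2$), obtain $T_n/n\to m\sqrt{\mu_X\mu_Y}$ by a Ces\`aro plus martingale argument, and get part 2 from Renlund's CLT for $\sqrt n(Z_n-z)$ (with $2\hat\gamma-1=3$) followed by Slutsky's theorem applied to $\frac{W_n-zT_n}{\sqrt n}=\sqrt n(Z_n-z)\frac{T_n}{n}$. The only part not carried out is the conditional-variance computation yielding the polynomial $G$, which is exactly the computational step the paper performs via $\mathbb E[D_{n+1}^2\mid\mathcal F_n]$.
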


\begin{thm}\label{thmXYself}
Consider the urn evolving by the matrix $Q_n=
\begin{pmatrix}
  X_n & 0 \\
  0 & Y_n \\
\end{pmatrix}.$ We have the following results:

\begin{enumerate}
    \item If $\mu_X > \mu_Y$,
\begin{equation}
T_n\stackrel{a.s}{=}m\mu_Xn+o(n),\quad
W_n\stackrel{a.s}{=}m\mu_Xn+o(n)\quad \text{and}\quad
B_n\stackrel{a.s}{=}B_{\infty}n^\rho+o(n^\rho),
\end{equation}
where $\rho=\frac{\mu_Y}{\mu_X}$ and $B_{\infty}$ is a positive
random variable.

    \item If $\mu_X=\mu_Y$,
\begin{equation}T_n\stackrel{a.s}{=}m\mu_Xn+o(n),\quad W_n\stackrel{a.s}{=}W_{\infty}n+o(n)\quad \text{and}\quad B_n\stackrel{a.s}{=}(\mu_Xm-W_{\infty})\ n+o(n),\end{equation}
where $W_\infty$ is a positive random variable.
\end{enumerate}

\end{thm}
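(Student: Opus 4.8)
The plan is to treat the white balls and blue balls separately, since with the diagonal matrix $Q_n=\operatorname{diag}(X_n,Y_n)$ the recursion decouples: a white ball in the sample triggers an addition of $X_n$ white balls only, and a blue ball triggers $Y_n$ blue balls only. Concretely $W_n = W_{n-1}+X_n\xi_n$ and $B_n = B_{n-1}+Y_n(m-\xi_n)$, with $T_n = W_n+B_n$. Conditionally on $\mathcal{F}_{n-1}$ we have $\mathbb{E}[\xi_n\mid\mathcal{F}_{n-1}] = m W_{n-1}/T_{n-1}$, so writing $Z_n := W_n/T_n$ for the white proportion, the drift of $Z_n$ is governed by the ratio of expected white additions $m\mu_X Z_{n-1}$ to expected total additions $m(\mu_X Z_{n-1}+\mu_Y(1-Z_{n-1}))$. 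I would first set up the stochastic-approximation framework of Renlund (Theorem 1 of \cite{Renlund1}, which the authors recall and adapt in Section \ref{Proofs}): show that $Z_n$ satisfies a recursion $Z_{n+1}=Z_n+\gamma_n(f(Z_n)+\text{noise}+\text{remainder})$ with step size $\gamma_n \asymp 1/n$ and analyze the fixed points of the associated ODE.

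The key structural fact is the behavior of $T_n$. First I would establish that $T_n/n\to m\mu_X$ when $\mu_X\ge\mu_Y$: indeed $\mathbb{E}[T_n-T_{n-1}\mid\mathcal{F}_{n-1}] = m(\mu_X Z_{n-1}+\mu_Y(1-Z_{n-1}))$, and once one shows $Z_n\to 1$ a.s.\ this gives the increment $\to m\mu_X$, whence $T_n = m\mu_X n+o(n)$ by Cesàro/martingale arguments (the error term $o(n)$, rather than the sharper $o(\sqrt n\ln(n)^\delta)$ of Theorems \ref{thmXopp}--\ref{thmXself}, is all that is claimed here, which simplifies the bookkeeping). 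To see $Z_n\to 1$: in the drift $f(z) = \mu_X z/(\mu_X z+\mu_Y(1-z)) - z$, the point $z=1$ is the relevant stable equilibrium when $\mu_X>\mu_Y$ (the factor $\mu_X z+\mu_Y(1-z)>0$ stays bounded away from $0$ and from $\infty$ since $\mu_X,\mu_Y>0$ are finite, so Renlund's hypotheses on bounded, non-degenerate increments hold), giving $W_n = T_n+o(n) = m\mu_X n+o(n)$ and hence also $B_n = o(n)$.

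For the blue balls in case (i) one needs the sharper rate $B_n\asymp n^\rho$ with $\rho=\mu_Y/\mu_X<1$. Here I would argue that $\mathbb{E}[B_n\mid\mathcal{F}_{n-1}] = B_{n-1}+\mu_Y\,\mathbb{E}[m-\xi_n\mid\mathcal{F}_{n-1}] = B_{n-1}(1+\mu_Y m/T_{n-1}) = B_{n-1}(1+\rho/n+o(1/n))$ using $T_{n-1}\sim m\mu_X n$; therefore $M_n := B_n\big/\prod_{k=1}^{n}(1+\mu_Y m/T_{k-1})$ is a nonnegative martingale, and $\prod_{k\le n}(1+\mu_Y m/T_{k-1})\sim c\,n^\rho$. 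Showing $M_n$ converges in $L^1$ (equivalently, is uniformly integrable) to a limit $B_\infty>0$ requires a second-moment bound: compute $\mathbb{E}[B_n^2\mid\mathcal{F}_{n-1}]$, which involves $\mathbb{E}[(m-\xi_n)^2\mid\mathcal{F}_{n-1}] = O(1)$ and $\sigma_Y^2<\infty$, and check that $\sum_n \mathbb{E}[(M_n-M_{n-1})^2]<\infty$ — this is where the finite-variance hypothesis is used, and since $\rho<1$ the variance terms are summable. Positivity of $B_\infty$ follows because $B_n$ is nondecreasing and $B_0\ge 1$ (the urn is nonempty and blue balls are never removed), together with a standard argument that a convergent nonnegative martingale built from a process that genuinely grows cannot have an atom at $0$. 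Finally, case (ii) $\mu_X=\mu_Y$ is the balanced-proportion case: now $f(z)\equiv 0$, so $Z_n$ is (up to $o(1/n)$-remainders) a bounded martingale, hence converges a.s.\ to a random limit $Z_\infty\in[0,1]$; setting $W_\infty := m\mu_X Z_\infty$ and using $T_n = m\mu_X n+o(n)$ (which holds since the total increment is $m\mu_X$ regardless of $Z_{n-1}$ when $\mu_X=\mu_Y$) gives $W_n = W_\infty n+o(n)$ and $B_n = (m\mu_X-W_\infty)n+o(n)$; positivity of $W_\infty$ again comes from $W_n$ being nondecreasing with $W_0\ge 1$. The main obstacle I anticipate is the $L^2$-bound controlling the martingale $M_n$ in case (i), specifically tracking the constants carefully enough through the product $\prod(1+\mu_Y m/T_{k-1})$ — where one must replace the a.s.\ asymptotics $T_k\sim m\mu_X k$ by quantitative estimates to ensure the product truly behaves like $n^\rho$ and the error does not destroy convergence.
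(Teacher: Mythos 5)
Your proposal follows essentially the same route as the paper: Renlund's stochastic-approximation theorem to get $Z_n\to 1$ when $\mu_X>\mu_Y$ (resp.\ a martingale limit $\tilde Z_\infty$ when $\mu_X=\mu_Y$), a Ces\`aro/martingale argument for $T_n/n\to m\mu_X$, and the normalized positive martingale $B_n\big/\prod_{k}\bigl(1+\tfrac{m\mu_Y}{T_k}\bigr)$ with $\prod_{k\le n}\bigl(1+\tfrac{m\mu_Y}{T_k}\bigr)\sim c\,n^{\rho}$ for the blue balls. The only differences are cosmetic --- your drift $f(z)=\frac{\mu_X z}{\mu_X z+\mu_Y(1-z)}-z$ is the paper's $m(\mu_X-\mu_Y)x(1-x)$ in a different normalization, and in case (ii) you read off $W_n/n$ from $Z_n\cdot T_n/n$ rather than from a second normalized martingale --- while your proposed $L^2$/uniform-integrability check for the blue-ball martingale is in fact more detail than the paper itself supplies.
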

\begin{rmq}
The case when $\mu_X<\mu_Y$ is obtained by interchanging the
colors.
\end{rmq}\\
\textbf{Example:} Let $m=1$, this particular case was studied by
R. Aguech \cite{R.Aguech}.
 Using martingales and branching processes , R. Aguech proved the following results:\\
if $\mu_X>\mu_Y$,
\begin{equation*}W_n=\mu_X n+o(n),\quad B_n=D n^\rho\quad \text{and}\quad T_n=\mu_X n+o(n),\end{equation*}
where $D$ is a positive random variable.\\
If $\mu_X=\mu_Y$,
\begin{equation*}W_n=\mu_X \frac{W}{W+B}n+o(n)\quad \text{and}\quad B_n=\mu_X\frac{B}{W+B}n+o(n),\end{equation*}
where $W$ and $B$ are positive random variables obtained by
embedding some martingales in continuous time.

\section{Proofs}\label{Proofs}

The stochastic algorithm approximation plays a crucial role in the
proofs in order to describe the asymptotic composition of the urn.
As many versions of the stochastic algorithm exist in the
literature (see \cite{Duflo} for example), we adapt the version of
H. Renlund in \cite{Renlund1, Renlund2}.
\subsection{A basic tool: Stochastic approximation}

\begin{df}\label{def-algo}
A stochastic approximation algorithm $(U_n)_{n\geq 0}$ is a
stochastic process taking values in $[0,1]$ and adapted to a
filtration $\mathcal{F}_n$ that satisfies
\begin{equation}\label{eq:algo_sto}
U_{n+1}-U_n = \gamma_{n+1}\big(f(U_n)+\Delta M_{n+1}\big),
\end{equation}
where $(\gamma_n)_{n\geq 1}$ and $(\Delta_n)_{n\geq 1}$ are two
$\mathcal F_n$-measurable sequences of random variables, $f$ is a
function from $[0,1]$ onto $\mathbb R$ and the following
conditions hold almost surely.

\begin{description}
    \item [(i)]$\frac{c_l}{n}\leq \gamma_n \leq \frac{c_u}{n}$,
    \item [(ii)]$|\Delta M_n|\leq K_u,$
    \item [(iii)]$ |f(U_n)|\leq K_f,$
    \item [(iv)]$ \mathbb E[\gamma_{n+1}
\Delta M_{n+1}| \mathcal F_n] \leq K_e \gamma_n^2,$
\end{description}

where the constants $c_l, c_u, K_u, K_f, $ and $K_e$ are positive
real numbers.
\end{df}
\begin{df} Let
$Q_f=\{x; f(x)=0\}.$ A zero $p\in Q_f$ will be called stable if
there exists a neighborhood $\mathcal{N}_p$ of $p$ such that
$f(x)(x-p)<0$ whenever $x\in \mathcal{N}_p\setminus\{p\}.$ If $f$
is differentiable, then $f'(p)$ is sufficient to determine that
$p$ is stable.
\end{df}

\begin{thm}[\cite{Renlund1}]\label{th:renlund}Let $U_n$ bea
stochastic algorithm defined in Equation (\ref{eq:algo_sto}). If
$f$ is continuous, then $\displaystyle\lim_{n\rightarrow +\infty}
U_n$ exists almost surely and is in $Q_f$. Furthermore, if $p$ is
a stable zero, then $\mathbb{P}\Big(U_n\longrightarrow p\Big)>0.$
\end{thm}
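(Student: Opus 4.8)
The plan is to run the classical stochastic‑approximation argument in two stages: first almost sure convergence of $U_n$ to $Q_f$, then a confinement estimate near a stable zero. \emph{Step 1 (the noise does not accumulate).} Write $D_{n+1}:=\gamma_{n+1}\Delta M_{n+1}=b_n+\widetilde D_{n+1}$ with $b_n:=\mathbb{E}[D_{n+1}\mid\mathcal F_n]$, so $(\widetilde D_{n+1})$ is a martingale‑difference sequence. Condition (iv) gives $|b_n|\le K_e\gamma_n^2\le K_ec_u^2/n^2$, so $\sum_nb_n$ converges absolutely a.s.; conditions (i)–(ii) give $|\widetilde D_{n+1}|\le\gamma_{n+1}K_u+|b_n|=O(1/n)$ and $\mathbb{E}[\widetilde D_{n+1}^2\mid\mathcal F_n]\le 2\gamma_{n+1}^2K_u^2+2b_n^2=O(1/n^2)$, so $\sum_{k<n}\widetilde D_{k+1}$ is an $L^2$‑bounded martingale and converges a.s. Hence $A_n:=\sum_{k=0}^{n-1}\gamma_{k+1}\Delta M_{k+1}$ converges a.s. to a finite limit $A_\infty$, and in particular $\sup_{m\ge n}|A_m-A_n|\to 0$.

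\emph{Step 2 (convergence to $Q_f$).} By (\ref{eq:algo_sto}), $S_n:=U_0+\sum_{k=0}^{n-1}\gamma_{k+1}f(U_k)=U_n-A_n$ is bounded (since $U_n\in[0,1]$ and $A_n$ converges), with $|S_{n+1}-S_n|=\gamma_{n+1}|f(U_n)|\le c_uK_f/(n+1)$, and $|U_{n+1}-U_n|\le\gamma_{n+1}(K_f+K_u)\to 0$, so the accumulation set $L$ of $(U_n)$ is a closed subinterval of $[0,1]$. To see that $f(U_n)\to 0$, i.e. $L\subseteq Q_f$: if some region $\{|f|\ge\delta\}$ were entered infinitely often, then, because the increments vanish, each visit would keep $f(U_k)$ of one fixed sign and $\ge\delta/2$ in absolute value over a block $[n_1,n_2]$ with $n_2\ge(1+\rho)n_1$ (the factor $\rho>0$ coming from (i) and the positive distance between $\{|f|\ge\delta\}$ and $\{|f|<\delta/2\}$), hence would shift $S_n$ by at least $\frac12 c_l\delta\ln(1+\rho)$ with that sign; since $(S_n)$ is bounded, these shifts cannot all agree in sign, so $U_n$ would have to recross a zero $c$ of $f$ separating a $\{f\ge\delta\}$‑region from a $\{f\le-\delta\}$‑region infinitely often — impossible, because near such a $c$ the drift points toward $c$, and once $\sup_{m\ge N}|A_m-A_N|$ is below the relevant gap the decomposition $U_n-U_N=(S_n-S_N)+(A_n-A_N)$, with $S_n-S_N$ monotone of the opposing sign, precludes the crossing. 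Thus $L\subseteq Q_f$; since in every application of this paper $Q_f$ is finite, $L$ is a single point $U_\infty\in Q_f$. \emph{This step is the crux}: for a general continuous $f$ one must additionally rule out a nondegenerate interval of accumulation points on which $f\equiv 0$ (using that $U_n$ then eventually stays where $f\equiv 0$, so only the summable noise acts), and this is exactly where the one‑dimensionality of the flow $\dot u=f(u)$ (equivalently, the theory of asymptotic pseudotrajectories / internally chain transitive sets) is needed.

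\emph{Step 3 (a stable zero is reached with positive probability).} Let $p$ be a stable zero and fix $\varepsilon>0$ with $(p-\varepsilon,p+\varepsilon)\subseteq\mathcal N_p$, so that $f(x)(x-p)<0$ for $0<|x-p|<\varepsilon$ and $p$ is the only zero of $f$ in $[p-\varepsilon,p+\varepsilon]$. Expanding the square,
\[
\mathbb{E}\big[(U_{n+1}-p)^2-(U_n-p)^2\mid\mathcal F_n\big]=2(U_n-p)\gamma_{n+1}f(U_n)+2(U_n-p)b_n+\mathbb{E}[D_{n+1}^2\mid\mathcal F_n],
\]
where, on $\{|U_n-p|<\varepsilon\}$, the first term is $\le 0$ by stability and, by Step 1, the other two are $\le C_0/n^2$ for a suitable constant $C_0$. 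Hence, with $\beta_n:=\sum_{k\ge n}C_0/k^2\to 0$ and $\tau:=\inf\{n:|U_n-p|\ge\varepsilon\}$, the process $(U_{n\wedge\tau}-p)^2+\beta_{n\wedge\tau}$ is a nonnegative supermartingale, and the maximal inequality gives, at any time $N$ with $|U_N-p|\le\varepsilon/2$, that $\mathbb{P}(\tau=\infty\mid\mathcal F_N)\ge 1-\big(\varepsilon^2/4+\beta_N\big)/\varepsilon^2$, which is $>0$ once $\beta_N<\tfrac34\varepsilon^2$; on $\{\tau=\infty\}$, Step 2 forces $U_n\to p$, since the limit lies in $[p-\varepsilon,p+\varepsilon]\cap Q_f=\{p\}$. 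As the event $\{\exists N:\ |U_N-p|\le\varepsilon/2,\ \beta_N<\tfrac34\varepsilon^2\}$ has positive probability (and, in the situations considered here, $p$ is the unique zero of $f$ in $(0,1)$ with $f(x)(x-p)<0$ throughout, so Step 2 already yields $U_n\to p$ almost surely), we conclude $\mathbb{P}(U_n\to p)>0$.
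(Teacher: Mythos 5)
Your overall architecture coincides with the paper's (which itself adapts Renlund): your Step 1 is the paper's Lemma \ref{V_n converges} in all but notation --- split $\gamma_{n+1}\Delta M_{n+1}$ into a conditionally centred part, which gives an $L^2$-bounded martingale by (i)--(ii), plus a predictable part made absolutely summable by (iv). Your Step 3 is in fact more self-contained than the paper, which simply refers the reader back to \cite{Renlund1} for the stable-zero claim; the stopped supermartingale $(U_{n\wedge\tau}-p)^2+\beta_{n\wedge\tau}$ together with the maximal inequality is a clean way to get the confinement estimate. (Note, however, that your final positive-probability assertion requires the process to enter the $\varepsilon/2$-neighbourhood of $p$ at a late enough time with positive probability; neither you nor the paper justifies this reachability, and without some nondegeneracy it can fail --- e.g.\ if $U_0$ sits at another zero and the noise vanishes there. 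Your parenthetical correctly observes that in the paper's applications the issue is moot because $p$ is the unique stable zero.)

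The genuine gap is in Step 2, and it is precisely the part of the proof that the paper actually writes out in detail. The theorem is stated for an arbitrary continuous $f$, so $Q_f$ may contain nondegenerate intervals; you establish that the accumulation set $L$ is an interval contained in $Q_f$ and then conclude it is a singleton only ``since in every application of this paper $Q_f$ is finite.'' That proves a weaker statement than the theorem, and the case you set aside --- ruling out oscillation of $U_n$ across an interval $[p,q]$ on which $f\equiv 0$ --- is exactly the paper's Case 2. The mechanism you name in passing is the right one and should be executed: once $n$ exceeds a (random) $N$ beyond which the tail oscillation of the convergent series $V_n=\sum_i\gamma_i\Delta M_i$ is below $\varepsilon/4$, each individual step is below $\varepsilon/2$, and $\gamma_n K_f<\varepsilon/4$, the first entry time $\tau_k$ into $[p,\infty)$ satisfies $U_{\tau_k}\le p+\varepsilon/2$, and thereafter, while the process stays in $[p,q]$, the drift term vanishes so $U_{\tau_k+j}-U_{\tau_k}=V_{\tau_k+j}-V_{\tau_k}<\varepsilon/4$; hence $\liminf U_n\le p$ forces $\limsup U_n\le p+\varepsilon$ for every $\varepsilon<q-p$, killing the oscillation. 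Deferring this to ``the theory of asymptotic pseudotrajectories'' leaves the stated theorem unproved for general continuous $f$; your drift-accumulation bound on $S_n$ does handle the complementary case where $p$ and $q$ are separated by a point with $f\neq 0$, so only this flat-component argument is missing.
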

\begin{rmq}
The conclusion of Theorem \ref{th:renlund} holds if we replace the
condition $(ii)$ in Definition \ref{def-algo} by the following
condition $\mathbb{E}[\Delta M_{n+1}^2|\mathcal{F}_n]\leq K_u$.

\end{rmq}

\begin{proof}[Proof of Theorem \ref{th:renlund}]
For the convenience of the reader, we adapt the proof of Theorem
\ref{th:renlund} and we show that, under the new condition $(ii)
 \quad\mathbb{E}[\Delta M_{n+1}^2|\mathcal{F}_n]\leq
 K_u$, the conclusion remains true. In fact, the following lemmas are useful.\\
\begin{lem}\label{V_n converges}
Let $V_n=\sum_{i=1}^n\gamma_i\Delta M_i$. Then, $V_n$ converges
almost surely.
\end{lem}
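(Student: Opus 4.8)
The plan is to show that $V_n = \sum_{i=1}^n \gamma_i \Delta M_i$ is an $L^2$-bounded (or at least $L^2$-convergent) martingale, hence converges almost surely by the martingale convergence theorem. First I would observe that $V_n$ is indeed an $\mathcal{F}_n$-martingale: since $\gamma_i$ is $\mathcal{F}_{i-1}$-measurable and $\Delta M_i$ has conditional mean zero given $\mathcal{F}_{i-1}$ (this is implicit in the decomposition \eqref{eq:algo_sto}, as $\Delta M_{i}$ plays the role of the martingale-difference noise term), we get $\mathbb{E}[V_{n+1} - V_n \mid \mathcal{F}_n] = \gamma_{n+1}\mathbb{E}[\Delta M_{n+1}\mid\mathcal{F}_n] = 0$.

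Next I would bound the sum of conditional variances. Using that $\gamma_{n+1}$ is $\mathcal{F}_n$-measurable together with condition (i), $\gamma_{n+1} \le c_u/(n+1)$, and the new version of condition (ii), $\mathbb{E}[\Delta M_{n+1}^2 \mid \mathcal{F}_n] \le K_u$, we obtain
\begin{equation*}
\sum_{i\ge 1} \mathbb{E}\big[(\gamma_i \Delta M_i)^2 \mid \mathcal{F}_{i-1}\big]
= \sum_{i\ge 1} \gamma_i^2\, \mathbb{E}[\Delta M_i^2 \mid \mathcal{F}_{i-1}]
\le K_u c_u^2 \sum_{i\ge 1} \frac{1}{i^2} < \infty \quad \text{a.s.}
\end{equation*}
Therefore the increasing process $\langle V \rangle_\infty$ associated with the martingale $V_n$ is almost surely finite. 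By the standard theorem on convergence of $L^2$ (locally square-integrable) martingales — a martingale converges almost surely on the event $\{\langle V\rangle_\infty < \infty\}$ — we conclude that $V_n$ converges almost surely. One can equally take expectations to get $\sup_n \mathbb{E}[V_n^2] = \sum_i \gamma_i^2 \mathbb{E}[\Delta M_i^2] \le K_u c_u^2 \sum_i i^{-2} < \infty$, so that $(V_n)$ is an $L^2$-bounded martingale and Doob's convergence theorem applies directly; the almost-sure phrasing via the bracket process is slightly stronger and handles the purely almost-sure hypotheses cleanly.

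The main obstacle, such as it is, is mostly bookkeeping: one must be careful that conditions (i) and (ii) are assumed to hold almost surely (not in expectation), so the cleanest route really is the bracket-process argument rather than unconditional $L^2$-boundedness, unless one first notes that almost-sure bounds plus the deterministic bound $\gamma_i^2 \le c_u^2/i^2$ let one pass to expectations anyway. I would also make explicit the (standard) fact that in the stochastic approximation set-up the noise term $\Delta M_{n+1}$ satisfies $\mathbb{E}[\Delta M_{n+1}\mid\mathcal{F}_n]=0$, since this is what makes $V_n$ a martingale in the first place; if the paper's convention instead only guarantees condition (iv), $\mathbb{E}[\gamma_{n+1}\Delta M_{n+1}\mid\mathcal{F}_n] \le K_e \gamma_n^2$, then $V_n$ is merely a near-supermartingale and I would instead invoke Robbins–Siegmund type almost-supermartingale convergence, combining (iv) (which gives summable drift, $\sum \mathbb{E}[\gamma_{i+1}\Delta M_{i+1}\mid\mathcal{F}_i] \le K_e \sum \gamma_i^2 < \infty$) with the summable-bracket bound above to again force convergence of $V_n$.
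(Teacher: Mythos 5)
Your argument is correct in substance and its core estimate is exactly the paper's: bound $\mathbb{E}[(\gamma_i\Delta M_i)^2]$ by $\tfrac{c_u^2}{i^2}K_u$ using condition (i) and the modified condition (ii), and conclude by $L^2$-martingale convergence. The one real divergence is in which of your two routes is actually available. Definition \ref{def-algo} does \emph{not} assert $\mathbb{E}[\Delta M_{n+1}\mid\mathcal{F}_n]=0$; it only gives condition (iv), $\mathbb{E}[\gamma_{n+1}\Delta M_{n+1}\mid\mathcal{F}_n]\le K_e\gamma_n^2$ (stated about the product precisely because $\gamma_{n+1}$ is only $\mathcal{F}_{n+1}$-measurable, not predictable --- even though it does vanish in the concrete urn applications). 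So your primary route rests on a hypothesis that is not part of the definition, and your ``fallback'' is in fact the paper's proof: set $A_i=\gamma_i\Delta M_i$, $\tilde A_i=\mathbb{E}[A_i\mid\mathcal{F}_{i-1}]$, show the compensated sum $C_n=\sum_{i\le n}(A_i-\tilde A_i)$ is an $L^2$-bounded martingale by the estimate above, and then use (iv) to get $\sum_i|\tilde A_i|\le K_e\sum_i\gamma_{i-1}^2<\infty$, so $V_n=C_n+\sum_{i\le n}\tilde A_i$ converges; no appeal to Robbins--Siegmund is needed once you decompose this way. Two small points of care: the identity $\mathbb{E}[(\gamma_i\Delta M_i)^2\mid\mathcal{F}_{i-1}]=\gamma_i^2\,\mathbb{E}[\Delta M_i^2\mid\mathcal{F}_{i-1}]$ again presumes predictability of $\gamma_i$ and should be replaced by the inequality obtained from the deterministic bound $\gamma_i\le c_u/i$; and to sum the compensator absolutely one should read (iv) as a bound on $|\mathbb{E}[\gamma_{n+1}\Delta M_{n+1}\mid\mathcal{F}_n]|$ (as in Renlund's original statement), a point the paper also glosses over.
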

\begin{proof}
Set $A_i=\gamma_i\Delta M_i$ and $\tilde
A_i=\mathbb{E}[A_i|\mathcal{F}_{i-1}].$ Define the martingale
$C_n=\sum_{i=1}^n(A_i-\tilde A_i),$ then
\begin{eqnarray*}
\mathbb{E}(C_n^2)&\leq&\sum_{i=1}^n\mathbb{E}(A_i^2)=\sum_{i=1}^n\mathbb{E}(\gamma_i^2\Delta M_i^2)\\
&\leq&\sum_{i=1}^n\frac{c_u^2}{i^2}\mathbb{E}(\Delta M_i^2),
\end{eqnarray*}
if there exists some positive constant $K_u$ such that
$\mathbb{E}[\Delta M_{n+1}^2|\mathcal{F}_n]\leq K_u$, we
conclude that $C_n$ is an $L^2-$ martingale and thus converges almost surely.\\
Next, since \[\sum_{i\geq 1}|\tilde A_i|\leq\sum_{i\geq
1}\frac{c_u^2}{(i-1)^2}K_l<+\infty,\] the series $\sum_{i\geq
1}A_i$ must also converges almost surely.
\end{proof}
\begin{lem}\label{Q_f}Let $U_\infty$ be the set of accumulation point of
$\{U_n\}$ and  $Q_f=\{x; f(x)=0\}$ be the zeros of $f$. Suppose
$f$ is continuous. Then,
\[\mathbb{P}\Big(U_\infty \subseteq Q_f\Big)=1.\]

\end{lem}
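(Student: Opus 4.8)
The plan is to show that the accumulation points of $\{U_n\}$ lie in $Q_f$ by a contradiction argument based on the convergence of $V_n=\sum_{i=1}^n\gamma_i\Delta M_i$ established in Lemma \ref{V_n converges}. First I would rewrite the defining recursion \eqref{eq:algo_sto} as a telescoping sum: for $m<n$,
\begin{equation*}
U_n-U_m=\sum_{i=m+1}^n\gamma_i f(U_{i-1})+(V_n-V_m).
\end{equation*}
Since $V_n$ converges a.s., the tail $V_n-V_m$ is uniformly small for $m$ large; and since $\gamma_i\le c_u/i$ while $|f(U_{i-1})|\le K_f$, each individual increment $U_i-U_{i-1}$ tends to $0$. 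Hence the set $U_\infty$ of accumulation points is a nonempty compact connected subset of $[0,1]$ (a closed interval, possibly a single point).

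The core of the argument is then to rule out the possibility that $U_\infty$ contains a point $p$ with $f(p)\neq 0$. Suppose $f(p)>0$ (the case $f(p)<0$ is symmetric). By continuity of $f$ there is an interval $J=[p-\varepsilon,p+\varepsilon]\cap[0,1]$ on which $f\ge c>0$. Because $U_i-U_{i-1}\to0$ and $p$ is an accumulation point, the process enters $J$ infinitely often and, once inside the slightly smaller interval $[p-\varepsilon/2,p+\varepsilon/2]$, it cannot jump out in one step for $i$ large; so there are infinitely many "excursions" during which $U$ stays in $J$ for a stretch of consecutive times $m+1,\dots,n$ with $U_m$ near the left end and $U_n$ near the right end (it must eventually leave $J$ on the right, never on the left, since whenever $U_{i-1}\in J$ the drift term $\gamma_i f(U_{i-1})\ge \gamma_i c>0$ pushes it right). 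During such an excursion,
\begin{equation*}
U_n-U_m=\sum_{i=m+1}^n\gamma_i f(U_{i-1})+(V_n-V_m)\ge c\sum_{i=m+1}^n\gamma_i+(V_n-V_m).
\end{equation*}
The left side is at most $2\varepsilon$ (bounded), and the martingale tail $|V_n-V_m|$ can be made $<\varepsilon/2$ by taking $m$ past the a.s. convergence point of $V$; therefore $\sum_{i=m+1}^n\gamma_i\le 3\varepsilon/(2c)$ on every such excursion. On the other hand, during the excursion $U$ moves from near $p-\varepsilon/2$ to past $p+\varepsilon/2$, a displacement of order $\varepsilon$, which forces $\sum_{i=m+1}^n\gamma_i f(U_{i-1})\ge \varepsilon - \varepsilon/2 = \varepsilon/2$ after absorbing the martingale part, hence $\sum_{i=m+1}^n\gamma_i\ge \varepsilon/(2K_f)$ — consistent, so this alone is not the contradiction. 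The real contradiction comes from iterating: since $p$ is an accumulation point there are \emph{infinitely many} disjoint such excursions, each contributing at least $\varepsilon/(2K_f)$ (or more to the point, each contributing a net rightward displacement that must be undone by a returning excursion on which $f$ could be negative — but near $p$ from the right, if $\varepsilon$ is chosen small, $f$ stays positive, so $U$ can never come back down to $p$), contradicting that $p$ is approached from both sides infinitely often, or contradicting $\sum_i\gamma_i^{}$-controlled boundedness via $U_n\in[0,1]$. Cleanly: once $U$ exceeds $p+\varepsilon/2$ at some late time, to return below $p$ it must traverse $J$ leftward, but on $J$ the increment is $\ge\gamma_i c+(\Delta V)$ which is eventually positive, so $U$ never returns — contradicting $p\in U_\infty$.

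I would organize this as follows: (1) telescoping identity and $U_i-U_{i-1}\to0$; (2) fix an accumulation point $p$ with, say, $f(p)>0$, choose $\varepsilon$ and $c$ with $f\ge c$ on $J=[p-\varepsilon,p+\varepsilon]\cap[0,1]$; (3) pick $N$ large enough that $|V_n-V_m|<\varepsilon c/(2c_u)$-type bounds hold for all $n>m\ge N$ and that $|U_i-U_{i-1}|<\varepsilon/2$ for $i\ge N$; (4) show that for $n>m\ge N$, if $U_m\in J$ then $U_i\in J$ or $U$ has exited on the right for all $i$ in the relevant range, and deduce $U_n\ge U_m$ whenever both are the first/last of a run in $J$; (5) conclude $U$ cannot have $p$ as a two-sided accumulation point, and since $U_\infty$ is an interval this forces $p$ to be an endpoint where one-sided approach still yields the same monotonicity contradiction unless $f(p)=0$. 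The main obstacle is step (4)–(5): making the "excursion" bookkeeping rigorous — precisely defining the stopping times at which $U$ enters and leaves $J$, handling the boundary cases $p=0$ or $p=1$, and ensuring the martingale tail bound is applied uniformly over the (random, infinitely many) excursion intervals rather than a single fixed one. This is exactly where Renlund's original argument does the careful work, and I would follow it, substituting the $L^2$ bound $\mathbb{E}[\Delta M_{n+1}^2|\mathcal F_n]\le K_u$ (which, via Lemma \ref{V_n converges}, still gives a.s. convergence of $V_n$) wherever the a.s. bound $|\Delta M_n|\le K_u$ was used.
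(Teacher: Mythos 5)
The paper does not actually prove this lemma: its ``proof'' is the single line ``See \cite{Renlund1}'', so there is nothing internal to compare against, and your sketch is in effect a reconstruction of Renlund's original argument. Your outline has the right ingredients and the right logical spine: the telescoping identity $U_n-U_m=\sum_{i=m+1}^n\gamma_i f(U_{i-1})+(V_n-V_m)$, the a.s.\ convergence of $V_n$ from Lemma \ref{V_n converges} (which is exactly where the weakened condition $\mathbb{E}[\Delta M_{n+1}^2|\mathcal F_n]\le K_u$ enters, as you note), the observation that increments vanish so that the accumulation set is a closed interval, and the final ``Cleanly:'' argument --- once the martingale tail is below $c\varepsilon$-scale, a point $p$ with $f(p)>0$ cannot be re-approached from the right because every passage through $J=[p-\varepsilon,p+\varepsilon]$ has nonnegative drift, so $U$ exits $J$ on the right and never crosses back. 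That is the correct contradiction, and the middle of your second paragraph (the bound $\sum\gamma_i\le 3\varepsilon/(2c)$ versus $\sum\gamma_i\ge\varepsilon/(2K_f)$, which you yourself concede is ``not the contradiction'') can simply be deleted.

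Two points you gloss over do need to be made explicit for the argument to close. First, the assertion that the process ``must eventually leave $J$'' requires the lower bound $\gamma_i\ge c_l/i$ from condition (i): if $U$ stayed in $J$ forever past time $N$, then $\sum_{i>N}\gamma_i f(U_{i-1})\ge c\,c_l\sum_{i>N}i^{-1}=\infty$ while $U_n-U_N$ is bounded and $V_n-V_N$ converges --- this divergence is what forces the exit, and it is also exactly what disposes of the boundary case $p=1$ (resp.\ $p=0$ with $f(p)<0$), where there is no room to exit on the right. Second, the martingale tail bound must be applied uniformly over all (random, infinitely many) excursion starting times, not over a single fixed $m$; this is harmless because a.s.\ convergence of $V_n$ gives a single random $N$ after which $|V_n-V_m|<\delta$ for \emph{all} $n>m\ge N$, but it should be said. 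With those two additions your step (4)--(5) bookkeeping goes through, and the proof is complete; as it stands it is an accurate plan rather than a proof, which you acknowledge.
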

\begin{proof}
See \cite{Renlund1}
\end{proof}

 Next, we prove the main result of the theorem.
 If $\displaystyle\lim_{n\rightarrow +\infty} U_n$ does not exist, we can find two rational numbers in
 the open interval $]\displaystyle\liminf_{n\rightarrow +\infty} U_n, \displaystyle\limsup_{n\rightarrow +\infty} U_n[$.\\
 Let $p<q$ be two arbitrary different rational numbers. If we can
 show that
 \[\mathbb{P}\Big(\{\liminf U_n \leq p\}\cap\{\limsup U_n \geq q\}\Big)=0,\]
 then, the existence of the limit will be established and the claim of
 the theorem follows from Lemma \ref{Q_f}.\\
 For this reason, we need to distinguish two different cases whether or not $p$
 and $q$ are in the same connected component of $Q_f$.\\

 \textbf{Case 1:
  $p$ and $q$ are not in the same connected
 component of $Q_f$.}\\
  See the proof in \cite{Renlund1}.\\

 \textbf{Case 2:
 $p$ and $q$ are in the connected component of
 $Q_f$.}\\
 Let $p$ and $q$ be two arbitrary rational numbers such that $p$ and
$q$ are in the same connected component of $Q_f$.
 Assume that $\displaystyle\liminf_{n\rightarrow +\infty} U_n \leq p$ and fix an
arbitrary $\varepsilon$ in such a way that $0\leq \varepsilon \leq
q-p$.\\
We aim to show that $\displaystyle\limsup_{n\rightarrow +\infty}
U_n\leq q$ i.e, it is sufficient to
show that $\displaystyle\limsup_{n\rightarrow +\infty} U_n\leq p+\varepsilon.$\\
In view of Lemma \ref{V_n converges}, we have
$V_n=\sum_{i=1}^n\gamma_i\Delta M_i$ converges a.s, then, for a
stochastic $N_1> 0$, for $n,m> N_1$ we have
$|W_n-W_m|<\frac{\varepsilon}{4}$ and
$\gamma_n\Delta M_n\leq \frac{\varepsilon}{4}$.\\
Let $N=max(\frac{4K_f}{\varepsilon},N_1)$. By assumption, there is
some stochastic $n>N$ such that $U_n-p<
\frac{\varepsilon}{2}$.\\
Let
\[\tau_1=\inf\{k\geq n; U_k\geq p\}\quad \text{and}\quad \sigma_1=\inf\{k>\tau_1; U_k<p\},\]
and define, for $n\geq 1,$
\[\tau_{n+1}=\inf \{k>\sigma_n; U_k\geq p\}\quad \sigma_{n+1}=\inf \{k>\tau_n; U_k<p\}.\]
Now, for all $k$ we have
\[U_{\tau_k}=U_{\tau_k-1}+\gamma_{\tau_k-1}(f(U_{\tau_k-1})+\Delta M_{\tau_k}).\]
Recall that $\gamma_{\tau_k-1}f(X_{\tau_k-1})\leq
\frac{K_f}{\tau_{k}-1}\leq \frac{K_f}{n}$, for $n\geq N\geq
\frac{4K_f}{\varepsilon}$ we have
$\gamma_{\tau_k-1}f(X_{\tau_k-1})<\frac{\varepsilon}{4}$. It
follows,
\[\gamma_{\tau_k-1}(f(U_{\tau_k-1})+\Delta M_{\tau_k})\leq \frac{K_f}{n}+\frac{\varepsilon}{4}\leq
\frac{\varepsilon}{4}+\frac{\varepsilon}{4}
=\frac{\varepsilon}{2}.\]
 Note that $f(x)=0 $
when $x \in [ p,q ]$ ($p$ and $q$ are in $Q_f$). For $j$ such that
$\tau_k+j-1$ is a time before the exit time of the interval
$[p,q]$, we have
\[U_{\tau_k+j}=X_{\tau_k}+W_{\tau_k+j}-W_{\tau_k}.\]
As $|W_{\tau_k+j}-W_{\tau_k}|<\frac{\varepsilon}{4},$ we have
$U_{\tau_k+j}\leq
p+\frac{\varepsilon}{2}+\frac{\varepsilon}{4}\leq p+\varepsilon,$
the precess will never exceed $p+\varepsilon$ before
$\sigma_{k+1}$. We conclude that $\sup_{k\geq n} U_k\leq
p+\varepsilon.$\\
To establish that the limit is to a stable point, we refer the
reader to \cite{Renlund1} to see a detailed proof.

\end{proof}

\begin{thm}[\cite{Renlund2}]\label{clt-renlund}
Let $(U_n)_{n\geq 0}$ satisfying Equation~\eqref{eq:algo_sto} such
that $\displaystyle\lim_{n\to +\infty} U_n = U^\star$. Let $\hat
\gamma_n:= n\gamma_n \hat f(U_{n-1})$ where $\hat f(x) =
\frac{-f(x)}{x-U^\star}$. Assume that $\hat \gamma_n$ converges
almost surely to some limit $\hat \gamma$. Then,

if $\hat\gamma > \frac12$ and if $\mathbb E[(n\gamma_n \Delta
M_n)^2|\mathcal F_{n-1}] \to \sigma^2 > 0$, then \[\sqrt n (U_n
-U^\star) \to \mathcal N\Big(0, \frac{\sigma^2}{2\hat\gamma
-1}\Big).\]

\end{thm}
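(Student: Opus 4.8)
The plan is to linearise the recursion around $U^\star$, solve it explicitly, and recognise the normalised remainder as a martingale to which a martingale central limit theorem applies. Since $f(U_n)=-\hat f(U_n)(U_n-U^\star)$, equation~\eqref{eq:algo_sto} becomes
\[
U_{n+1}-U^\star=(1-a_{n+1})(U_n-U^\star)+\gamma_{n+1}\Delta M_{n+1},\qquad a_{n+1}:=\gamma_{n+1}\hat f(U_n)=\frac{\hat\gamma_{n+1}}{n+1}.
\]
Setting $\beta_0=1$, $\beta_n=\prod_{k=1}^n(1-a_k)$ and dividing by $\beta_n$, the recursion telescopes:
\[
U_n-U^\star=\beta_n\Bigl((U_0-U^\star)+\sum_{k=1}^n\frac{\gamma_k\Delta M_k}{\beta_k}\Bigr)=:\beta_n\bigl((U_0-U^\star)+N_n\bigr),
\]
so everything reduces to the asymptotics of $\sqrt n\,\beta_n N_n$.

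First I would control the rate of $\beta_n$. Since $\hat\gamma_n\to\hat\gamma>\tfrac12$ almost surely, for any small $\varepsilon>0$ we have $a_k\ge(\hat\gamma-\varepsilon)/k$ for $k$ large, whence $\beta_n\le Cn^{-\hat\gamma+\varepsilon}$; choosing $\varepsilon$ so that $\hat\gamma-\varepsilon>\tfrac12$ gives $\sqrt n\,\beta_n\to0$, so the deterministic term $\sqrt n\,\beta_n(U_0-U^\star)$ vanishes and only $\sqrt n\,\beta_n N_n$ survives. The conditional quadratic variation of the martingale $N_n$ is $\langle N\rangle_n=\sum_{k=1}^n\beta_k^{-2}\gamma_k^2\,\mathbb E[\Delta M_k^2\mid\mathcal F_{k-1}]$, and writing $\gamma_k^2\,\mathbb E[\Delta M_k^2\mid\mathcal F_{k-1}]=k^{-2}\,\mathbb E[(k\gamma_k\Delta M_k)^2\mid\mathcal F_{k-1}]$ with the last bracket tending to $\sigma^2$, a Toeplitz/Kronecker-type summation---using $\beta_n/\beta_k\asymp(k/n)^{\hat\gamma}$ together with $\hat\gamma>\tfrac12$, so that the exponent $2\hat\gamma-2>-1$ forces the partial sums to grow like their last term---yields
\[
n\,\beta_n^2\,\langle N\rangle_n\;\longrightarrow\;\frac{\sigma^2}{2\hat\gamma-1}.
\]

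Then I would check the conditional Lindeberg condition for the triangular array of increments of $\sqrt n\,\beta_n N_n$, namely $(\sqrt n\,\beta_n)(\gamma_k/\beta_k)\Delta M_k$ for $k\le n$: since $|\Delta M_k|\le K_u$ by condition~(ii), $\gamma_k\asymp k^{-1}$ and $\beta_n/\beta_k\asymp(k/n)^{\hat\gamma}$, the largest increment is of order at most $\max(n^{1/2-\hat\gamma},n^{-1/2})\to0$, so the array is asymptotically negligible and a standard martingale central limit theorem gives $\sqrt n\,\beta_n N_n\to\mathcal N\bigl(0,\sigma^2/(2\hat\gamma-1)\bigr)$, which is the assertion. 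Under the relaxed hypothesis $\mathbb E[\Delta M_n^2\mid\mathcal F_{n-1}]\le K_u$ one first truncates $\Delta M_k$ and controls the discarded part in $L^2$.

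The hard part will be making the \emph{random} normalisation $\beta_n$ cooperate with the martingale structure: $\beta_n$ appears both as the outer factor $\sqrt n\,\beta_n$ and inside each summand through $\beta_k^{-1}$, and it is not $\mathcal F_k$-measurable in the way a triangular-array CLT expects. I would handle this by writing $\beta_n/\beta_k=\prod_{j=k+1}^n(1-a_j)$ and comparing it, via a generalised Toeplitz lemma, with its deterministic surrogate $(k/n)^{\hat\gamma}$ \emph{before} invoking the CLT (equivalently, by passing to a stable-convergence version of the martingale central limit theorem). The accompanying bookkeeping---the Ces\`aro estimates that turn $\hat\gamma_k\to\hat\gamma$ and $\mathbb E[(k\gamma_k\Delta M_k)^2\mid\mathcal F_{k-1}]\to\sigma^2$ into the limit of $n\beta_n^2\langle N\rangle_n$---is routine but must be done with care, and it is precisely here that the standing hypothesis $\hat\gamma>\tfrac12$ is indispensable.
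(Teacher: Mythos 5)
The paper does not actually prove this theorem: it is imported verbatim from Renlund's second paper \cite{Renlund2} and used as a black box (only the almost-sure convergence result, Theorem \ref{th:renlund}, is reproved here under the weakened condition $(ii)$). So there is no in-paper proof to compare against, and your sketch has to stand on its own. In outline it does, and it follows the standard route (essentially Renlund's own): the linearisation $f(U_n)=-\hat f(U_n)(U_n-U^\star)$, the explicit solution via $\beta_n=\prod_k(1-\hat\gamma_k/k)$, the estimate $\beta_n\asymp n^{-\hat\gamma}$ forcing $\sqrt n\,\beta_n\to 0$, the computation $n\beta_n^2\langle N\rangle_n\to\sigma^2/(2\hat\gamma-1)$ using $2\hat\gamma-2>-1$, and the conditional Lindeberg check are all correct, and your identification of the random weights $\beta_n/\beta_k$ as the genuine obstruction, to be handled by comparison with the deterministic surrogate $(k/n)^{\hat\gamma}$ \emph{before} invoking a martingale CLT (or by a stable-convergence version of it), is the right resolution.

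Three points should be made explicit in a full write-up. First, $N_n=\sum_{k\le n}\gamma_k\Delta M_k/\beta_k$ is only an approximate martingale: condition $(iv)$ of Definition \ref{def-algo} permits $\mathbb E[\gamma_k\Delta M_k\mid\mathcal F_{k-1}]$ to be of order $\gamma_{k-1}^2$, so $N_n$ carries a drift of order $\sum_k\beta_k^{-1}k^{-2}=\sum_k O\bigl(k^{\hat\gamma-2}\bigr)$; you must verify that $\sqrt n\,\beta_n$ times this drift vanishes (it does in each of the regimes $\hat\gamma<1$, $\hat\gamma=1$, $\hat\gamma>1$) and then apply the CLT to the compensated differences. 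Second, $\beta_n$ is well defined and nonzero only once $1-\hat\gamma_k/k\neq 0$, which holds beyond a random index $n_0$; starting the product at $n_0$ costs nothing since the finitely many initial terms are absorbed into the vanishing term $\sqrt n\,\beta_n(U_{n_0}-U^\star)$. Third, for the stated Gaussian limit (rather than a mixed normal obtained by stable convergence) you need $\hat\gamma$ and $\sigma^2$ to be deterministic constants; this is implicit in the statement and holds in every application made of the theorem in this paper. None of these affects the viability of the approach.
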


\subsection{Proof of the main results}
\begin{proof}[Proof of Theorem \ref{thmXopp}] Consider the
urn model defined in Equation (\ref{recurrence}) with $Q_n=
\begin{pmatrix}
  0 & X_n \\
  X_n & 0 \\
\end{pmatrix}$.
We have the following recursions:
\begin{equation}\label{recurrence-opp2}W_{n+1}=W_n+X_{n+1}(m-\xi_{n+1})\quad \text{and} \quad T_{n+1}=T_n+mX_{n+1}.\end{equation}
\textbf{Proof of claim 1}
\begin{lem}
Let $Z_n=\frac{W_n}{T_n}$ be  the proportion of white balls in the
urn after $n$ draws. Then, $Z_n $ satisfies the stochastic
approximation algorithm defined by (\ref{eq:algo_sto}) with
$\gamma_n=\frac{1}{T_n}$, $f(x)=\mu_X m(1-2x)$ and $\Delta
M_{n+1}=X_{n+1}(m-\xi_{n+1}-mZ_n)-\mu m(1-Z_n)$.
\end{lem}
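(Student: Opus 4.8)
The plan is to verify directly that $Z_n = W_n/T_n$ satisfies the defining recursion \eqref{eq:algo_sto} with the claimed data, and then check conditions (i)--(iv) of Definition \ref{def-algo}. First I would compute $Z_{n+1} - Z_n$ from the recursions \eqref{recurrence-opp2}. Writing $T_{n+1} = T_n + mX_{n+1}$ and $W_{n+1} = W_n + X_{n+1}(m-\xi_{n+1})$, we get
\[
Z_{n+1} - Z_n = \frac{W_{n+1}}{T_{n+1}} - \frac{W_n}{T_n}
= \frac{T_n W_{n+1} - W_n T_{n+1}}{T_n T_{n+1}}
= \frac{T_n X_{n+1}(m-\xi_{n+1}) - W_n m X_{n+1}}{T_n T_{n+1}}.
\]
The numerator is $X_{n+1}\big(T_n(m-\xi_{n+1}) - m W_n\big) = T_n X_{n+1}\big(m-\xi_{n+1} - mZ_n\big)$, so
\[
Z_{n+1} - Z_n = \frac{1}{T_{n+1}}\, X_{n+1}\big(m - \xi_{n+1} - mZ_n\big).
\]
Here the natural step size is $\gamma_{n+1} = 1/T_{n+1}$ (the lemma writes $\gamma_n = 1/T_n$, i.e. the same sequence shifted). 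Then I would add and subtract the conditional mean: since $X_{n+1}$ is independent of $\mathcal F_n$ and $\mathbb E[\xi_{n+1}\mid\mathcal F_n] = m W_n/T_n = mZ_n$ (the hypergeometric mean), we have $\mathbb E\big[X_{n+1}(m-\xi_{n+1}-mZ_n)\mid\mathcal F_n\big] = \mu_X\big(m - mZ_n - mZ_n\big) = \mu_X m(1-2Z_n)$. Splitting accordingly gives $Z_{n+1}-Z_n = \gamma_{n+1}\big(f(Z_n) + \Delta M_{n+1}\big)$ with $f(x) = \mu_X m(1-2x)$ and $\Delta M_{n+1} = X_{n+1}(m-\xi_{n+1}-mZ_n) - \mu_X m(1-Z_n)$, which matches the statement (noting $f(x) = \mu_X m(1-2x) = \mu_X m(1-x) - \mu_X m x$ is consistent with the decomposition; the $\Delta M_{n+1}$ in the lemma absorbs the $-\mu_X m(1-Z_n)$ piece and $f$ carries the remaining $-\mu_X m Z_n$ after recombination, so I would double-check the exact bookkeeping here).

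Next I would verify the four conditions. For (i): $T_n = T_0 + m\sum_{k=1}^n X_k$ with $X_k \ge 1$ strictly positive (indeed, by hypothesis the $X_k$ are strictly positive integer-valued, so $X_k \ge 1$), hence $T_n \ge T_0 + mn \ge mn$, giving $\gamma_n = 1/T_n \le 1/(mn)$; and since $X$ has finite mean, the strong law gives $T_n \sim m\mu_X n$, so for large $n$, $\gamma_n \ge c_l/n$ for a suitable (random but a.s.\ positive) $c_l$ — more carefully, $T_n \le T_0 + m(\sup_k X_k)\cdot\!$ doesn't work if $X$ is unbounded, so instead I would use $T_n \le T_0 + m\sum_{k\le n}X_k$ and the SLLN to get $T_n \le 2m\mu_X n$ eventually a.s., yielding the lower bound on $\gamma_n$. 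For (iii): $|f(x)| = \mu_X m|1-2x| \le \mu_X m$ on $[0,1]$, so $K_f = \mu_X m$. For (iv): $\mathbb E[\gamma_{n+1}\Delta M_{n+1}\mid\mathcal F_n] = \gamma_{n+1}\mathbb E[\Delta M_{n+1}\mid\mathcal F_n] = 0$ since $\gamma_{n+1}$ is $\mathcal F_n$-measurable (it depends on $T_{n+1} = T_n + mX_{n+1}$ — wait, $X_{n+1}$ is not $\mathcal F_n$-measurable) — this is the subtle point: $\gamma_{n+1} = 1/T_{n+1}$ is \emph{not} $\mathcal F_n$-measurable, so I would instead estimate $|\mathbb E[\gamma_{n+1}\Delta M_{n+1}\mid\mathcal F_n]|$ directly, using $\gamma_{n+1} = \gamma_n \cdot \frac{T_n}{T_{n+1}} = \gamma_n(1 - \frac{mX_{n+1}}{T_{n+1}})$, so $\gamma_{n+1}\Delta M_{n+1} = \gamma_n\Delta M_{n+1} - \gamma_n\frac{mX_{n+1}}{T_{n+1}}\Delta M_{n+1}$; the first term has conditional mean zero, and the second is bounded in absolute value by $\gamma_n \cdot \frac{mX_{n+1}}{T_{n+1}}\cdot|\Delta M_{n+1}| \le \gamma_n^2 \cdot m X_{n+1}|\Delta M_{n+1}| / m$ using $T_{n+1}\ge mn$, whose conditional expectation is $\le K_e\gamma_n^2$ provided $\mathbb E[X_{n+1}|\Delta M_{n+1}|\mid\mathcal F_n]$ is bounded — here I would use that $|\Delta M_{n+1}| \le X_{n+1}m + \mu_X m$ crudely and $\mathbb E[X^2] = \sigma_X^2 + \mu_X^2 < \infty$.

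Condition (ii) is where I would invoke the remark following Theorem \ref{th:renlund}: since $X$ need not be bounded, $\Delta M_{n+1}$ is not bounded, so I would verify the weaker $\mathbb E[\Delta M_{n+1}^2\mid\mathcal F_n] \le K_u$ instead. Expanding, $\Delta M_{n+1} = X_{n+1}(m-\xi_{n+1}-mZ_n) - \mu_X m(1-Z_n)$, and since $|m-\xi_{n+1}-mZ_n| \le m$ (as $\xi_{n+1}\in[0,m]$ and $Z_n\in[0,1]$) and $|1-Z_n|\le 1$, we get $\mathbb E[\Delta M_{n+1}^2\mid\mathcal F_n] \le 2m^2\mathbb E[X_{n+1}^2] + 2m^2\mu_X^2 = 2m^2(\sigma_X^2 + 2\mu_X^2)$, a finite constant. \textbf{The main obstacle} I anticipate is the $\mathcal F_n$-measurability issue in conditions (i) and (iv): $\gamma_{n+1} = 1/T_{n+1}$ depends on $X_{n+1}$, so the clean "$\mathbb E[\Delta M_{n+1}\mid\mathcal F_n]=0$ kills (iv)" argument fails and one must extract the correction term $\frac{mX_{n+1}}{T_{n+1}}\Delta M_{n+1}$ and bound its conditional expectation using $T_{n+1}\ge mn$ together with the finite second moment of $X$; everything else is routine algebra and moment bounds.
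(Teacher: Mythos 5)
Your proposal is correct and follows essentially the same route as the paper: derive $Z_{n+1}-Z_n=\frac{1}{T_{n+1}}X_{n+1}\bigl(m-\xi_{n+1}-mZ_n\bigr)$ from the recursions and then verify conditions (i)--(iv) of Definition \ref{def-algo}, using the conditional-second-moment version of (ii) from the remark after Theorem \ref{th:renlund}. Two minor remarks: the drift bookkeeping you flag is indeed a typo in the lemma statement (it should read $\Delta M_{n+1}=X_{n+1}(m-\xi_{n+1}-mZ_n)-\mu_X m(1-2Z_n)$ so that $f(Z_n)+\Delta M_{n+1}$ recovers the increment), and your treatment of (iv) --- extracting the correction term involving $\frac{mX_{n+1}}{T_{n+1}}\Delta M_{n+1}$ because $\gamma_{n+1}=1/T_{n+1}$ is not $\mathcal{F}_n$-measurable --- is actually more careful than the paper's one-line bound $\mathbb{E}[\gamma_{n+1}\Delta M_{n+1}|\mathcal{F}_n]\le \frac{1}{T_n}\mathbb{E}[\Delta M_{n+1}|\mathcal{F}_n]=0$, which is not literally valid since $\Delta M_{n+1}$ changes sign.
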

\begin{proof}
We need to check the conditions of definition \ref{def-algo}.\\

\begin{description}
\item [(i)]  Recall that $T_n=T_0+m\sum_{i=1}^nX_i$, with
$(X_i)_{i\geq 1}$ are iid random variables. It follows, by
Rajechman strong law of large numbers, that
\begin{equation}\label{asymp-T_n}T_n\stackrel{a.s}{=}\mu_X mn
+o(\sqrt{n}\ \ln(n)^\delta),\quad \delta
>\frac{1}{2},\end{equation} it follows that
$\frac{1}{(m\mu_X+1)n}\leq\frac{1}{T_n}\leq\frac{2}{m\mu_X n},$
then, $c_l=\frac{1}{m\mu_X+1}$ and $c_u=\frac{2}{m\mu_X n},$

\item[(ii)] $\mathbb{E}[\Delta
M_{n+1}^2|\mathcal{F}_n]\leq (6m^2+m)\mathbb{E}(X^2)+9m^2\mu^2=K_u,$\\

\item[(iii)] $|f(Z_n)|=m\mu_X|1-2Z_n|\leq 3m\mu_X=K_f$,\\

\item[(iv)]$\mathbb{E}(\gamma_{n+1}\Delta
M_{n+1}|\mathcal{F}_n)\leq \frac{1}{T_n}\mathbb{E}(\Delta
M_{n+1}|\mathcal{F}_n)=0=K_e$.
\end{description}

\end{proof}

\begin{prop}\label{prop1/2}
 The proportion of white balls in the urn
after $n$ draws, $Z_n$,  converges almost surely to $\frac{1}{2}$.
\end{prop}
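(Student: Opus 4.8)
The plan is to apply Theorem~\ref{th:renlund} to the stochastic approximation algorithm satisfied by $Z_n = W_n/T_n$, which was just verified in the preceding lemma. Since $f(x) = m\mu_X(1-2x)$ is continuous on $[0,1]$ and the four conditions of Definition~\ref{def-algo} hold (with the $(ii)$ replaced by the $L^2$-version, as in the remark following Theorem~\ref{th:renlund}), Theorem~\ref{th:renlund} guarantees that $\lim_{n\to\infty} Z_n$ exists almost surely and lies in $Q_f = \{x \in [0,1] : f(x)=0\}$. The equation $m\mu_X(1-2x)=0$ has the unique solution $x = \tfrac12$ (recall $\mu_X > 0$), so $Q_f = \{\tfrac12\}$ and therefore $Z_n \to \tfrac12$ almost surely.

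There is essentially no obstacle here: the work has already been done in the lemma that checks Definition~\ref{def-algo}, and the set of zeros of $f$ is a singleton, so the "which zero does it converge to" issue that usually makes these arguments delicate simply does not arise. If one wants to be slightly more careful, one may remark that $\tfrac12$ is in fact a stable zero, since $f'(\tfrac12) = -2m\mu_X < 0$, so $f(x)(x-\tfrac12) < 0$ for $x \neq \tfrac12$; combined with the fact that $\tfrac12$ is the only zero, this reconfirms that the almost sure limit must be $\tfrac12$.

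I would write the proof as a two-line application of Theorem~\ref{th:renlund}:

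\begin{proof}
By the previous lemma, $Z_n$ is a stochastic approximation algorithm in the sense of Definition~\ref{def-algo} (with condition $(ii)$ in its $L^2$-form), associated with the continuous function $f(x) = m\mu_X(1-2x)$. By Theorem~\ref{th:renlund}, $Z_n$ converges almost surely to a point of $Q_f = \{x \in [0,1] : f(x) = 0\}$. Since $\mu_X > 0$, the only solution of $m\mu_X(1-2x) = 0$ is $x = \tfrac12$, hence $Q_f = \{\tfrac12\}$ and $Z_n \to \tfrac12$ almost surely. (Moreover $f'(\tfrac12) = -2m\mu_X < 0$, so $\tfrac12$ is a stable zero.)
\end{proof}
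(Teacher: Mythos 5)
Your proposal is correct and follows exactly the same route as the paper: invoke Theorem~\ref{th:renlund} via the preceding lemma, note that $f(x)=m\mu_X(1-2x)$ is continuous with $Q_f=\{\tfrac12\}$, and conclude. Your added observation that $\tfrac12$ is the unique zero (so no stability analysis is needed to identify the limit) is a slight clarification of the paper's phrasing, but the argument is identical in substance.
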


\begin{proof}[Proof of Proposition \ref{prop1/2}]

Since the process $Z_n$ satisfies the stochastic approximation
algorithm defined by Equation (\ref{eq:algo_sto}), we apply
Theorem \ref{th:renlund}. As the function $f$ is continuous we
conclude that $Z_n$ converges almost surely to $\frac{1}{2}$: the
unique stable zero of the function $f$.
\end{proof}
We apply the previous results to the urn composition.  As we can
write $\frac{W_n}{n}=\frac{W_n}{T_n}\frac{T_n}{n}$, we deduce from
Proposition \ref{prop1/2} and Equation (\ref{asymp-T_n}) that
$\frac{W_n}{n}\stackrel{a.s}{=}\bigl(\frac{1}{2}+o(1)\bigr)\Big(\mu_Xm+o\Bigl(\frac{\ln(n)^\delta}{\sqrt{n}}\Bigr)\Bigr),$
then this corollary follows:
\begin{cor}
The number of white balls in the urn after $n$ draws, $W_n$,
satisfies for $n$ large enough
\begin{equation*}W_n\stackrel{a.s}{=}\frac{\mu_X m}{2}n+o(\sqrt{n}\ \ln(n)^\delta),\quad \delta >\frac{1}{2}.\end{equation*}
\end{cor}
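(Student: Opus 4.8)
The plan is to combine $W_n = Z_n T_n$ with Proposition~\ref{prop1/2} and the first--order expansion \eqref{asymp-T_n}, but some care is needed: substituting $Z_n = \tfrac12 + o(1)$ and $T_n = \mu_X m n + o(\sqrt n\,\ln(n)^\delta)$ directly into $W_n = Z_n T_n$ only gives $W_n = \tfrac{\mu_X m}{2}n + o(n)$, because a term of the form $o(1)\cdot n$ is in general far larger than $o(\sqrt n\,\ln(n)^\delta)$. So I would first write
\[
W_n - \frac{\mu_X m}{2}\,n \;=\; \Big(Z_n - \frac12\Big)T_n \;+\; \frac12\big(T_n - \mu_X m\,n\big),
\]
dispatch the last term by \eqref{asymp-T_n}, and reduce everything to the almost sure estimate
\[
W_n - \tfrac12 T_n \;=\; \Big(Z_n - \tfrac12\Big)T_n \;=\; o\big(\sqrt n\,\ln(n)^\delta\big),\qquad \delta>\tfrac12 .
\]
This is really a law--of--the--iterated--logarithm refinement of Proposition~\ref{prop1/2}, and it is the heart of the matter.

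To obtain it I would work directly with $D_n := W_n - \tfrac12 T_n$. From \eqref{recurrence-opp2} one has $D_{n+1} = D_n + X_{n+1}\big(\tfrac m2 - \xi_{n+1}\big)$, and since $\mathbb{E}[\xi_{n+1}\mid\mathcal F_n] = m Z_n$ this rearranges into
\[
D_{n+1} \;=\; \Big(1 - \frac{\mu_X m}{T_n}\Big)D_n \;+\; \eta_{n+1},\qquad
\eta_{n+1} := X_{n+1}\Big(\tfrac m2-\xi_{n+1}\Big)-\mathbb{E}\Big[X_{n+1}\Big(\tfrac m2-\xi_{n+1}\Big)\mid\mathcal F_n\Big],
\]
where $(\eta_n)$ is a martingale difference sequence bounded in $L^2$ uniformly in $n$ (because $\xi_{n+1}\in\{0,\dots,m\}$, $|Z_n - \tfrac12|\le\tfrac12$, and $X$ has finite variance). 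Using $T_n = \mu_X m n + o(n)$, the coefficient equals $1 - \tfrac1n + o(\tfrac1n)$, and the products $\prod_{i=k}^{n-1}\big(1 - \mu_X m/T_i\big)$ are of order $k/n$; solving the linear recursion therefore shows that $n D_n$ coincides, up to lower--order terms, with the martingale $M_n := \sum_{i=2}^{n} i\,\eta_i$. Since $\langle M\rangle_n = \sum_{i\le n} i^2\,\mathbb{E}[\eta_i^2\mid\mathcal F_{i-1}] \asymp n^3$, the martingale law of the iterated logarithm gives $M_n = O\big(n^{3/2}\sqrt{\ln\ln n}\big)$ a.s., hence $D_n = O\big(\sqrt{n\ln\ln n}\big)$, which is $o(\sqrt n\,\ln(n)^\delta)$ for every $\delta>0$, in particular for $\delta>\tfrac12$.

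An alternative route, closer to the Rajchman argument already used for $T_n$, is to estimate $u_n := \mathbb{E}[D_n^2]$ by squaring the recursion and taking expectations: this yields $u_{n+1}\le(1 - c/n)u_n + C$ for some $c>0$, hence $u_n = O(n)$; Chebyshev's inequality along a geometrically growing subsequence $(n_k)$, combined with Doob's maximal inequality on the blocks $[n_k,n_{k+1}]$, then upgrades this to $D_n = o(\sqrt n\,\ln(n)^\delta)$ a.s. for $\delta>\tfrac12$. I expect this almost sure rate for $D_n$ to be the only genuine obstacle; once it is in hand, the displayed splitting together with \eqref{asymp-T_n} finishes the proof, and the same computation (using $B_n = T_n - W_n$) yields the analogous expansion for $B_n$.
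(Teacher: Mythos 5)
Your diagnosis is exactly right, and your route is genuinely different from---and more careful than---the paper's. The paper proves the corollary precisely by the direct substitution you reject: it writes $\frac{W_n}{n}=\frac{W_n}{T_n}\cdot\frac{T_n}{n}=\bigl(\frac12+o(1)\bigr)\bigl(\mu_X m+o(\ln(n)^\delta/\sqrt n)\bigr)$ and declares the result. As you observe, this only yields $W_n=\frac{\mu_X m}{2}n+o(n)$, since Proposition \ref{prop1/2} provides no rate for $Z_n-\frac12$, so the stated error term does not actually follow from the paper's one-line argument. Your splitting $W_n-\frac{\mu_X m}{2}n=(Z_n-\frac12)T_n+\frac12(T_n-\mu_X m\,n)$ isolates the real issue, and the linear recursion $D_{n+1}=(1-\mu_X m/T_n)D_n+\eta_{n+1}$ for $D_n=W_n-\frac12T_n$ is correct, since $\mathbb{E}\bigl[X_{n+1}(\frac m2-\xi_{n+1})\mid\mathcal F_n\bigr]=\mu_X(\frac m2-mZ_n)=-\frac{\mu_X m}{T_n}D_n$. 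The one point to tighten is the appeal to the martingale law of the iterated logarithm: with $X$ only assumed to have finite variance the increments $i\,\eta_i$ are unbounded, and the martingale LIL requires more than a conditional second-moment bound. You do not need it, though: the strong law for $L^2$ martingales gives $M_n=o\bigl(\langle M\rangle_n^{1/2}\ln(\langle M\rangle_n)^{\delta}\bigr)=o\bigl(n^{3/2}\ln(n)^\delta\bigr)$ for $\delta>\frac12$, which is exactly enough to conclude $D_n=o(\sqrt n\,\ln(n)^\delta)$; your alternative second-moment/Chebyshev/Doob argument also closes the gap. Your rate is consistent with the paper's later results (the CLT for $(W_n-\frac12T_n)/\sqrt n$ and $\mathbb{V}ar(W_n)\asymp n$), so it is sharp up to the logarithmic factor. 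In short, your proof supplies precisely the estimate that the paper's stated argument is missing.
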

\textbf{Proof of claim 2} We aim to apply Theorem
\ref{clt-renlund}. For this reason, we need to find this limits:
\begin{equation*}
    \lim_{n\rightarrow \infty}\mathbb{E}[\bigl(\frac{n}{T_n}\bigr)^2\Delta
    M_{n+1}^2|\mathcal{F}_n]\quad \text{and}\quad
    \lim_{n\rightarrow \infty}-\frac{n}{T_n}f'(Z_n).
\end{equation*}

We have
\begin{eqnarray*}
\mathbb{E}[\Delta
M_{n+1}^2|\mathcal{F}_n]&=&\mathbb{E}(X_{n+1}^2)\mathbb{E}[(m-\xi_{n+1}-mZ_n)^2|\mathcal{F}_n])+\mu^2\mathbb{E}[(m-2mZ_n)^2|\mathcal{F}_n]
\\ &&-2\mu_X^2\mathbb{E}[(m-\xi_{n+1}-mZ_n)(m-2mZ_n)|\mathcal{F}_n]\\
&=&(\sigma_X^2+\mu_X^2)\Big[m^2-4m^2Z_n+4m^2Z_n^2+mZ_n(1-Z_n)\frac{T_n-m}{T_n-1}\Big]-\mu_X^2[m^2+4m^2Z_n^2-4m^2Z_n].
\end{eqnarray*}
As $n$ tends to infinity, we have $Z_n
\stackrel{a.s}{\longrightarrow} \frac{1}{2}$ and
$\frac{T_n-m}{T_n-1}\stackrel{a.s}{\longrightarrow} 1$. Then,
\begin{equation*}\lim_{n\rightarrow \infty}\mathbb{E}[\Delta
M_{n+1}^2|\mathcal{F}_n]\stackrel{a.s}{=}(\sigma_X^2+\mu_X^2)\frac{m}{4}\quad
\text{and}\quad \lim_{n\rightarrow
\infty}-\frac{n}{T_n}f'(Z_n)\stackrel{a.s}{=}2.\end{equation*}
According to Theorem \ref{clt-renlund},
$\sqrt{n}(Z_n-\frac{1}{2})$ converges in distribution to
$\mathcal{N}(0,\frac{\sigma_X^2+\mu_X^2}{12\mu_X^2m})$. Finally,
by writing
$\Big(\frac{W_n-\frac{1}{2}T_n}{\sqrt{n}}\Big)=\sqrt{n}(Z_n-\frac{1}{2})\frac{T_n}{n}$,
we conclude using Slutsky theorem.\\

\textbf{Proof of claim 3} To prove this claim, we follow the proof
of Lemma 3 and Theorem 2 in  \cite{A.L.O}. Using the same methods,
we show in a first step that the variables $(X_n(m-\xi_n))_{n\geq
0}$ are $\alpha$-mixing variables with a strong mixing coefficient
$\alpha(n)=o\Big(\frac{\ln(n)^\delta}{\sqrt{n}}\Big)$, $\delta
>\frac{1}{2}$. To conclude, we adapt the Bernstein method.
Consider the same notation as in Theorem 2 in \cite{A.L.O}, and
define $S_n=\frac{1}{\sqrt{n}}\sum_{i=1}^n\tilde\xi_i$ where
$\tilde\xi_i=X_i(m-\xi_i)-\mu_X(m-\mathbb{E}(\xi_i))$. At first,
we need to estimate the variance of $W_n$.
\begin{prop}\label{var}
The variance of $W_n$ satisfies
\begin{equation}\label{variance}\mathbb{V}ar(W_n)=\frac{m(\sigma_X^2+\mu_X^2)+m^2\sigma_X^2}{12}\ n+o(\sqrt{n}\ \ln(n)^\delta),\quad \delta>\frac{1}{2}.\end{equation}
\end{prop}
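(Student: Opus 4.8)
The plan is to compute $\Var(W_n)$ by first obtaining a recursion for it from the evolution equation $W_{n+1}=W_n+X_{n+1}(m-\xi_{n+1})$, and then to solve the resulting linear recursion asymptotically. Conditioning on $\mathcal F_n$ and using that $X_{n+1}$ is independent of $\mathcal F_n$ with mean $\mu_X$ and second moment $\sigma_X^2+\mu_X^2$, I would write
\[
\mathbb E[W_{n+1}\mid\mathcal F_n]=W_n+\mu_X\mathbb E[m-\xi_{n+1}\mid\mathcal F_n]=W_n+\mu_X m\Bigl(1-\tfrac{W_n}{T_n}\Bigr),
\]
using $\mathbb E[\xi_{n+1}\mid\mathcal F_n]=m W_n/T_n$ from the hypergeometric law stated in the introduction. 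For the second moment I would similarly expand $\mathbb E[W_{n+1}^2\mid\mathcal F_n]$, which requires $\mathbb E[\xi_{n+1}\mid\mathcal F_n]$ and $\mathbb E[\xi_{n+1}^2\mid\mathcal F_n]$ (the latter via the hypergeometric variance $m\,Z_n(1-Z_n)\tfrac{T_n-m}{T_n-1}$, already used in the proof of claim 2). Combining with the known first-order behaviour $T_n\stackrel{a.s}{=}m\mu_X n+o(\sqrt n\ln(n)^\delta)$ and $Z_n\to\tfrac12$, and writing $\Var(W_n)=\mathbb E[W_{n+1}^2]-(\mathbb E[W_{n+1}])^2$ after taking expectations, I expect a recursion of the schematic form
\[
\Var(W_{n+1})=\Bigl(1-\tfrac{c}{n}+o(\tfrac1n)\Bigr)\Var(W_n)+d+o(1),
\]
where the constants $c,d$ are extracted from the $Z_n\to\tfrac12$ limits; here the linear-in-$n$ growth of $\Var(W_n)$ forces $c=0$ to leading order (the drift coefficient $-\mu_X m\cdot 2/T_n$ contributes at order $1/n$ but its effect on the $\Theta(n)$ leading term must be tracked carefully), so that $\Var(W_n)\sim d\,n$ with $d=\tfrac{m(\sigma_X^2+\mu_X^2)+m^2\sigma_X^2}{12}$.

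A cleaner route, which is the one I would actually pursue, is to avoid solving the recursion directly and instead decompose $W_n-\mathbb E(W_n)$ via the martingale increments. Write $W_n=\mathbb E(W_n)+\sum_{k=1}^n\Theta_k$ where $\Theta_k=W_k-\mathbb E[W_k\mid\mathcal F_{k-1}]-\bigl(\mathbb E(W_k)-\mathbb E[\mathbb E(W_k\mid\mathcal F_{k-1})]\bigr)$ captures the genuinely new randomness at step $k$, namely $\Theta_k=X_k(m-\xi_k)-\mathbb E[X_k(m-\xi_k)\mid\mathcal F_{k-1}]+(\text{lower order})$. The point is that $\Var(W_n)$ is governed, up to $o(\sqrt n\ln(n)^\delta)$, by $\sum_{k=1}^n\mathbb E[\operatorname{Var}(X_k(m-\xi_k)\mid\mathcal F_{k-1})]$ plus the cross terms coming from the recursive dependence of $\mathbb E[W_k\mid\mathcal F_{k-1}]$ on $W_{k-1}$. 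Each conditional variance $\operatorname{Var}(X_k(m-\xi_k)\mid\mathcal F_{k-1})$ converges almost surely, by the computation already done for $\lim\mathbb E[\Delta M_{n+1}^2\mid\mathcal F_n]=(\sigma_X^2+\mu_X^2)\tfrac m4$ in the proof of claim 2, together with the extra contribution $m^2\sigma_X^2$ coming from the fluctuation of $X_k$ itself (the $\Delta M$ computation subtracted off a $\mu_X(1-Z_n)$ term that is not subtracted here). A Cesàro/Toeplitz summation then yields the coefficient $\tfrac{(\sigma_X^2+\mu_X^2)m+m^2\sigma_X^2}{12}$ after accounting for the factor coming from $T_n\sim m\mu_X n$ in passing from increments indexed by draws to the normalization.

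The main obstacle is the bookkeeping of the cross terms: because $\mathbb E[W_{k}\mid\mathcal F_{k-1}]=W_{k-1}+\mu_X m(1-W_{k-1}/T_{k-1})$ depends on $W_{k-1}$, the increments $\Theta_k$ are not the increments of a martingale with orthogonal contributions to $\Var(W_n)$ — there is a feedback term of relative size $1/k$. One must verify that the accumulated effect of these $O(1/k)$ feedback terms only perturbs the leading $\Theta(n)$ coefficient by the correct amount (indeed, the factor $\tfrac1{12}$ rather than $\tfrac14$ in the final answer reflects exactly this feedback, analogous to the $\tfrac1{2\hat\gamma-1}$ denominator with $\hat\gamma=2$ in Theorem \ref{clt-renlund}). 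Concretely I would introduce $v_n:=\Var(W_n)/n$, derive $v_{n+1}=v_n+\tfrac1n\bigl(d-\text{(something)}\,v_n\bigr)+o(\tfrac1n)$ with the coefficient of $v_n$ equal to $0$ at leading order — so that $v_n$ stabilizes — and control the error terms using the a.s. estimate on $T_n$ and uniform boundedness of the conditional moments (finiteness of $\sigma_X^2$). Once $v_n\to\tfrac{m(\sigma_X^2+\mu_X^2)+m^2\sigma_X^2}{12}$ is established with the stated $o(\sqrt n\ln(n)^\delta)$ remainder, the proposition follows.
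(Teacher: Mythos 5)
Your overall strategy --- derive a linear recursion for $\Var(W_n)$ from $W_{n+1}=W_n+X_{n+1}(m-\xi_{n+1})$ using the conditional hypergeometric moments of $\xi_{n+1}$ together with $T_n\stackrel{a.s}{=}m\mu_X n+o(\sqrt n\ln(n)^\delta)$ and $Z_n\to\frac12$, then solve it asymptotically --- is exactly the paper's route, and your remark that the $\frac1{12}$ rather than $\frac14$ reflects the feedback of the drift on $W_n$ is the right intuition. But your concrete description of the recursion is wrong in a way that would derail the computation. Since $\mathbb{E}[X_{n+1}(m-\xi_{n+1})\mid\mathcal F_n]=\mu_X m\bigl(1-\tfrac{W_n}{T_n}\bigr)$, the cross term is
\[
2\,\Cov\bigl(W_n,X_{n+1}(m-\xi_{n+1})\bigr)=-2\mu_X m\,\Cov\bigl(W_n,\tfrac{W_n}{T_n}\bigr)=-\tfrac{2}{n}\Var(W_n)\bigl(1+o(1)\bigr),
\]
and because $\Var(W_n)=\Theta(n)$ this is a $\Theta(1)$ contribution, of the same order as the additive constant; it cannot be discarded. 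The paper's recursion is $\Var(W_{n+1})=\bigl(1-\tfrac2n+o(\cdot)\bigr)\Var(W_n)+\tfrac{m(\sigma_X^2+\mu_X^2)+m^2\sigma_X^2}{4}+o(\cdot)$, i.e.\ $c=2$ and $d=\tfrac{m(\sigma_X^2+\mu_X^2)+m^2\sigma_X^2}{4}$ in your notation, and the resulting slope is $d/(1+c)=d/3$, which is where the $\frac1{12}$ comes from. Your claim that ``the linear-in-$n$ growth forces $c=0$'' is false: every $c>-1$ produces linear growth, with slope $d/(1+c)$; taking $c=0$ would give slope $\tfrac{m(\sigma_X^2+\mu_X^2)+m^2\sigma_X^2}{4}$, contradicting the very statement you are proving.

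The same error reappears in your normalized version: with $v_n=\Var(W_n)/n$ the recursion reads $v_{n+1}=v_n+\tfrac1{n+1}\bigl(d-3v_n\bigr)+o\bigl(\tfrac1n\bigr)$, and it is precisely the \emph{nonzero} coefficient $-3$ of $v_n$ that makes $v_n$ stabilize at $d/3$; were that coefficient $0$ as you assert, $v_n$ would grow like $d\ln n$ and $\Var(W_n)$ would be of order $n\ln n$, not $n$. So the missing step is the correct evaluation of the covariance (equivalently, of your ``feedback'' cross terms): compute it, obtain the $-\tfrac2n\Var(W_n)$ contribution, and solve the recursion $\Var(W_{n+1})=a_n\Var(W_n)+b_n$ explicitly via $\Var(W_n)=\bigl(\prod_{k\le n}a_k\bigr)\bigl(\Var(W_0)+\sum_{k<n}b_k/\prod_{j\le k}a_j\bigr)$ with $\prod_{k\le n}a_k\sim e^{a}/n^{2}$, as the paper does. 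The remaining ingredients of your plan (the hypergeometric second moment $mZ_n(1-Z_n)\tfrac{T_n-m}{T_n-1}+m^2Z_n^2$, and the identification of the limit $\tfrac{m(\sigma_X^2+\mu_X^2)+m^2\sigma_X^2}{4}$ of the one-step variance, including the extra $m^2\sigma_X^2$ from the fluctuation of $X_k$) do match the paper.
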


\begin{proof}[Proof of Proposition \ref{var}]
Recall that the number of white balls in the urn satisfies
Equation (\ref{recurrence-opp2}), then
\begin{equation*}\mathbb{V}ar(W_{n+1})=\mathbb{V}ar(W_n)+\mathbb{V}ar(X_n(m-\xi_n))+2\ \mathbb{C}ov(W_{n-1},X_n(m-\xi_n)).\end{equation*}
We have
$\mathbb{V}ar(X_n(m-\xi_n))=(\sigma_X^2+\mu_X^2)\Big(\mathbb{V}ar(mZ_{n-1})
+\mathbb{E}\Big(mZ_{n-1}(1-Z_{n-1})\frac{T_{n-1}-m}{T_{n-1}-1}\Big)\Big)+\sigma_X^2\mathbb{E}(m-\xi_n)^2.$

Using Equation (\ref{asymp-T_n}) and the fact that
$Z_n\stackrel{a.s}{\rightarrow}\frac{1}{2}$, we obtain
\begin{eqnarray*}\mathbb{V}ar(W_{n+1})&=&\Big(1-\frac
2n+o\Big(\frac{\ln(n)^\delta}{n^{\frac32}}\Big)\Big)\mathbb{V}ar(W_{n})
+\frac{m(\sigma_X^2+\mu_X^2)+m^2\sigma_X^2}{4}+o\Big(\frac{\ln(n)^\delta}{\sqrt
n}\Big)\\
&=&a_n\mathbb{V}ar(W_{n})+b_n,\end{eqnarray*} where
$a_n=\Bigl(1-\frac
2n+o\Big(\frac{\ln(n)^\delta}{n^{\frac32}}\Big)\Bigr)$ and
$b_n=\frac{m(\sigma_X^2+\mu_X^2)+m^2\sigma_X^2}{4}+o\Big(\frac{\ln(n)^\delta}{\sqrt
n}\Big).$\\
Thus, \begin{equation*} \mathbb{V}ar(W_n)=\Big(\prod_{k=1}^n
a_k\Big)\Big(\mathbb{V}ar(W_0)+\sum_{k=0}^{n-1}\frac{b_k}{\prod_{j=0}^ka_j}\Big).
\end{equation*}
There exists a constant $a$ such that

$\prod_{k=1}^na_k=\displaystyle\frac{e^{a}}{n^2}\Big(1+o\Big(\frac{\ln(n)^\delta}{\sqrt
n}\Big)\Big)$, which leads to
\begin{equation*}
\mathbb{V}ar(W_n)=\frac{m(\sigma_X^2+\mu_X^2)+m^2\sigma_X^2}{12}n+o(\sqrt
n\ln(n)^\delta),\quad \delta>\frac{1}{2}.
\end{equation*}
\end{proof}
Recall that we follow the proof of Theorem 2 in \cite{A.L.O},
using Equation (\ref{variance}), we conclude that
\begin{equation}\frac{W_n-\mathbb{E}(W_n)}{\sqrt{n}}\stackrel{\mathcal{L}}{\longrightarrow}
 \mathcal{N}\Bigl(0,\frac{m(\sigma_X^2+\mu_X^2)+m^2\sigma_X^2}{12}\Bigr).
 \end{equation}
\end{proof}

 \begin{proof}[Proof of Theorem \ref{thmXself}]

Consider the urn model defined in (\ref{recurrence}) with $Q_n=
\begin{pmatrix}
  X_n & 0 \\
  0 & X_n \\
\end{pmatrix}$. The following recurrences hold:

\begin{equation}\label{W-self}
W_{n+1}=W_n+X_{n+1}\xi_{n+1}\quad \text{and}\quad
T_{n+1}=T_n+mX_{n+1}.
\end{equation}
As $T_n$ is a sum of iid random variables then $T_n$ satisfies the
following
\begin{equation}\label{totalself}T_n\stackrel{a.s}{=}\frac{\mu_Xm}{2}n+o(\sqrt{n}\ln(n)^\delta).\end{equation}
 The processes $\tilde
M_{n}=\prod_{k=1}^{n-1}\Big(\frac{T_k}{T_k+m\mu_X}\Big)W_n$ and
$\tilde N_n=\prod_{k=1}^{n-1}\Big(\frac{T_k}{T_k+m\mu_X}\Big)B_n$
are two $\mathcal{F}_n$ positive martingales. In view of
(\ref{totalself}), we have
$\prod_{k=1}^{n-1}\Big(\frac{T_k}{T_k+m\mu_X}\Big)\stackrel{a.s}{=}\displaystyle\frac{e^{\gamma}}{n}\Big(1+o\Big(\frac{\ln
(n)^\delta}{\sqrt n}\Big)\Big)$ for a positive constant $\gamma$.
Thus, there exists nonnegative random variables $\tilde W_\infty$
and $\tilde B_{\infty}$  such that $\tilde W_\infty+\tilde
B_\infty\stackrel{a.s}{=}m\mu_X$ and

\begin{equation*}\frac{W_n}{n}\stackrel{a.s}{\longrightarrow}\tilde W_\infty,\quad
\text{and}\quad \frac{B_n}{n}\stackrel{a.s}{\longrightarrow}\tilde
B_{\infty}.\end{equation*}

\textbf{Example: } In the original P\`olya urn model \cite{Polya},
when $m=1$ and $X=C$ (deterministic), the random variable $\tilde
W_\infty/C$ has a $Beta(\frac{B_0}{C},\frac{W_0}{C})$ distribution
 \cite{Athreya-Ney,S. Janson}. Whereas, M.R. Chen and M. Kuba
\cite{Chen-Kuba} considered the case when $X=C$ (non random) and
$m>1$. They gave moments of all orders of $W_n$ and proved that
$\tilde W_\infty$ cannot be an ordinary $Beta$ distribution.

\begin{rmq}
 Suppose that the
random variable $X$ has moments of all orders,  let
$\:m_k=E(X^k)$, for $ k\ge 1$. We have, almost surely, $W_n\le
T_n$ then, by Minskowski inequality, we obtain
$\mathbb{E}(W_n^{2k})\leq (mn)^{2k}\mathbb{E}(X^{2k})$. Using
Carleman's condition we conclude that, if
$\sum_{k\ge1}\mu_{2k}^{-\frac{1}{2k}}=\infty$, then the random
variable $\tilde W_\infty$ is determined by its moments.
Unfortunately, till now we still unable to give exact expressions
of moments of all orders of $W_n$. But, we can characterize the
distribution of $\tilde W_\infty$ in the case when the variable
$X$ is bounded.
\end{rmq}

\begin{lem}
\label{Abs_con} Assume that $X$ is a bounded random variable,
then, for fixed $W_0,B_0$ and $m$ the random variable $\tilde
W_\infty$ is absolutely continuous.
\end{lem}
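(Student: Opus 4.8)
The plan is to reduce the statement, for $m=1$, to the absolute continuity of the martingale limit of a single supercritical branching process, where a short distributional argument applies, and to outline the route for $m\ge2$. Two soft reductions will be used throughout. First, since $X$ is bounded, say $0<X\le M$ almost surely, one has $W_n\le T_n\le T_0+mMn$, so $\tilde W_\infty$ lives in the bounded interval $(0,m\mu_X)$; together with $\prod_{k=1}^{n-1}\frac{T_k}{T_k+m\mu_X}\stackrel{a.s}{=}\frac{e^{\gamma}}{n}(1+o(1))$ this makes $\tilde M_n$ uniformly bounded, hence $\tilde W_\infty\in L^2$ and it suffices to rule out a singular part of $\mathcal L(\tilde W_\infty)$. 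Second, if $Z$ is a random variable and $\mathcal G$ a $\sigma$-field with $\mathcal L(Z\mid\mathcal G)$ almost surely absolutely continuous, then $\mathcal L(Z)$ is absolutely continuous (Tonelli, applied to the conditional densities).

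For $m=1$ I would use the classical continuous-time embedding: give each ball an independent rate-$1$ exponential clock and, when a ball rings, return it and add $X$ fresh balls of its colour, $X$ a fresh copy of the weight. Since exactly one ball rings per event, the white population $(W(t))$ and the blue population $(B(t))$ evolve as two \emph{independent} continuous-time Markov branching processes in which a particle has an $\mathrm{Exp}(1)$ lifetime and is then replaced by $1+X$ particles; each is supercritical with Malthusian parameter $\mu_X$, and boundedness of $X$ gives the Kesten--Stigum condition, so $W(t)e^{-\mu_X t}\to\Gamma_W>0$ and $B(t)e^{-\mu_X t}\to\Gamma_B>0$ almost surely with $\Gamma_W$ and $\Gamma_B$ independent. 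Reading the urn off at the successive ring times and using $T_n\stackrel{a.s}{=}\mu_X n+o(n)$ gives $\tilde W_\infty=\mu_X\,\Gamma_W/(\Gamma_W+\Gamma_B)$. Since $x\mapsto\mu_X x/(x+s)$ is a $C^1$ increasing bijection of $(0,\infty)$ onto $(0,\mu_X)$ for each $s>0$, conditioning on $\Gamma_B$ and invoking the Tonelli reduction leaves exactly one thing to prove: $\Gamma_W$ is absolutely continuous.

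This I would get from the root decomposition of the branching tree: with $L\sim\mathrm{Exp}(1)$ the ancestor's lifetime, $X$ its offspring weight and $(\Gamma_W^{(i)})_{i\ge1}$ i.i.d.\ copies of $\Gamma_W$, all mutually independent,
\[
\Gamma_W\stackrel{d}{=}e^{-\mu_X L}\sum_{i=1}^{1+X}\Gamma_W^{(i)}.
\]
Writing $A=e^{-\mu_X L}$ and $S=\sum_{i=1}^{1+X}\Gamma_W^{(i)}$, this reads $\Gamma_W\stackrel{d}{=}AS$ with $A$ absolutely continuous on $(0,1]$, $S>0$ a.s., and $A$ independent of $S$ (the ancestor's lifetime is independent of its offspring count and of the subtrees, which only start after time $L$). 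A product of an absolutely continuous factor with an independent a.s.\ positive factor is absolutely continuous, so $\Gamma_W$ is; the case $W_0\ge2$ reduces to this by first writing the $W_0$-ancestor limit as a sum of $W_0$ i.i.d.\ one-ancestor copies. This completes the lemma for $m=1$.

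The hard part is $m\ge2$: drawing $m$ balls at once couples the two colours, so the sub-populations are no longer independent and the embedding above loses its force. The plan there is to embed the whole composition $(W(t),B(t))$ as a two-type Markov jump process, pass to the almost sure limit $(\Gamma_W,\Gamma_B)$ of $e^{-m\mu_X t}(W(t),B(t))$, and prove that $\tilde W_\infty=m\mu_X\,\Gamma_W/(\Gamma_W+\Gamma_B)$ is absolutely continuous by analysing the conditional laws $H_{W_n,B_n}:=\mathcal L(\tilde W_\infty\mid\mathcal F_n)$, which form a measure-valued martingale converging to $\delta_{\tilde W_\infty}$: the one-step displacement of $\tilde W_\infty$ is $O(1/T_n)=O(1/n)$ with conditional spread at that scale controlled precisely because $X$ is bounded, and a Fourier estimate on $\widehat{H}_{W_n,B_n}$ uniform in $n$ should force an integrable characteristic function for $\mathcal L(\tilde W_\infty)$. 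The delicate point — and where boundedness of $X$ is used essentially — is excluding a singular continuous component, absence of atoms being the comparatively easy part via a Ces\`aro bound on $|\widehat{H}|^2$.
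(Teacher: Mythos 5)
Your argument for $m=1$ is sound: the continuous-time embedding into two independent single-type Markov branching processes, the identity $\Gamma_W\stackrel{d}{=}e^{-\mu_X L}\sum_{i=1}^{1+X}\Gamma_W^{(i)}$ with $e^{-\mu_X L}$ absolutely continuous and independent of the (a.s.\ positive) sum, and the convolution/Tonelli reductions all work, and boundedness of $X$ does give you Kesten--Stigum and non-extinction (since $X>0$). But the lemma is asserted for general $m$, and for $m\ge 2$ you have not given a proof; you have given a research plan. The entire burden of the statement sits exactly where you say "a Fourier estimate on $\widehat{H}_{W_n,B_n}$ uniform in $n$ \emph{should} force an integrable characteristic function" and where you concede that excluding a singular continuous component is "the delicate point". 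No such estimate is derived, and it is not at all routine: integrability of the characteristic function would give a \emph{continuous bounded} density, which is strictly stronger than absolute continuity and is not something the one-step displacement bound $O(1/T_n)$ hands you. Moreover the two-type continuous-time embedding you propose for $m\ge2$ does not exist in the usual sense, because a draw of $m$ balls is a single transition whose rate depends on the joint composition, so the Athreya--Ney machinery that powered your $m=1$ argument is unavailable. As it stands the proposal proves only the (previously known) case $m=1$.

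For comparison, the paper stays entirely in discrete time and follows Chen--Wei. Fixing $\ell$ and conditioning on $\{W_\ell=j\}$ with $j,\,T_{\ell-1}-j\ge mA$ (these events exhaust $\Omega$), a hypergeometric identity gives, for each $c$ in the support of $X$,
\begin{equation*}
\sum_{i=0}^m \mathbb{P}\bigl(W_{n+1}=j+k\mid W_n=j+k-ci\bigr)\le p_c\Bigl(1-\frac1n+\frac{\kappa}{n^2}\Bigr),
\end{equation*}
and iterating this one-step bound yields $\max_k\mathbb{P}(W_n=j+k\mid W_\ell=j)\le C(\ell)/n$. Since $W_n/n\to\tilde W_\infty$ and $W_n$ takes at most $O(n)$ values on the relevant range, this uniform $1/n$ bound on point masses converts directly into a Lipschitz bound $\mathbb{P}(\tilde W_\infty\in I\mid W_\ell=j)\le C(\ell)\,|I|$ for intervals $I$, i.e.\ a bounded conditional density, and a limiting argument over $\ell$ finishes the proof. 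If you want to salvage your approach for $m\ge2$, the missing ingredient is precisely a quantitative anti-concentration estimate of this type for the conditional laws of $W_n$; you should either prove the displayed inequality (which is where boundedness of $X$ and the combinatorics of the $m$-ball draw actually enter) or find a substitute for it, rather than appealing to an unproved Fourier decay.
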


The proof that $\tilde W_\infty$ is absolutely continuous is very
close to that of Theorem 4.2 in \cite{Chen-Wei}. We give the main
proposition to make  the proof clearer.

\begin{prop}\cite{Chen-Wei}
Let $\Omega_\ell$ be a sequence of increasing events such that
$\mathbb{P}(\cup_{\ell \ge 0}\Omega_\ell)=1$. If there exists
nonnegative Borel measurable function $\{f_\ell\}_{\ell\geq 1}$
such that $\mathbb{P}\Big(\Omega_\ell\cap \tilde
W_\infty^{-1}(B)\Big)=\int_Bf_\ell (x)dx$ for all Borel sets B,
then, $f=\displaystyle\lim_{l\rightarrow+\infty}f_\ell$ exists
almost everywhere and $f$ is the density of $\tilde W_\infty$.
\end{prop}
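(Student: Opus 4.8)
The plan is to exploit the monotonicity built into the hypotheses: since the events $\Omega_\ell$ increase, the set functions $B \mapsto \mathbb{P}(\Omega_\ell \cap \tilde W_\infty^{-1}(B))$ increase in $\ell$, and this will force the densities $f_\ell$ to increase almost everywhere, after which the limit $f$ is handled by the monotone convergence theorem.

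First I would record that, because $\Omega_\ell \subseteq \Omega_{\ell+1}$, for every Borel set $B$ one has $\mathbb{P}(\Omega_\ell \cap \tilde W_\infty^{-1}(B)) \le \mathbb{P}(\Omega_{\ell+1} \cap \tilde W_\infty^{-1}(B))$, hence $\int_B f_\ell(x)\,dx \le \int_B f_{\ell+1}(x)\,dx$. To upgrade this to a pointwise statement, I would test against the set $B = \{x : f_\ell(x) > f_{\ell+1}(x)\}$: on this $B$ the difference $\int_B (f_\ell - f_{\ell+1})\,dx$ is at most $0$ while the integrand is strictly positive, so $B$ must have Lebesgue measure zero. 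Thus $f_\ell \le f_{\ell+1}$ almost everywhere, and $(f_\ell)_{\ell \ge 1}$ is an increasing sequence of nonnegative measurable functions. Redefining each $f_\ell$ on the countable union of the exceptional null sets, the pointwise limit $f := \lim_{\ell \to \infty} f_\ell$ exists everywhere as an element of $[0,+\infty]$, which already gives the first conclusion once finiteness is verified.

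Next I would identify $f$ as the density. Fix a Borel set $B$. Since $\mathbb{P}(\cup_\ell \Omega_\ell) = 1$, the event $\tilde W_\infty^{-1}(B)$ differs from $\tilde W_\infty^{-1}(B) \cap \cup_\ell \Omega_\ell$ only by a null set, so $\mathbb{P}(\tilde W_\infty \in B) = \mathbb{P}\big(\tilde W_\infty^{-1}(B) \cap \cup_\ell \Omega_\ell\big)$. Continuity of probability from below, applied to the increasing sequence of events $\tilde W_\infty^{-1}(B) \cap \Omega_\ell$, then yields $\mathbb{P}(\tilde W_\infty \in B) = \lim_{\ell \to \infty} \mathbb{P}(\Omega_\ell \cap \tilde W_\infty^{-1}(B)) = \lim_{\ell \to \infty} \int_B f_\ell(x)\,dx$, and by the monotone convergence theorem the right-hand side equals $\int_B f(x)\,dx$. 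Hence $\mathbb{P}(\tilde W_\infty \in B) = \int_B f(x)\,dx$ for every Borel set $B$, which is precisely the assertion that $f$ is the density of $\tilde W_\infty$. Taking $B = \mathbb{R}$ gives $\int f(x)\,dx = \mathbb{P}(\cup_\ell \Omega_\ell) = 1$, so $f \in L^1$ and in particular $f$ is finite almost everywhere, completing the claim that the limit exists a.e.

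The only genuinely delicate point is the passage from the integral inequality $\int_B f_\ell \le \int_B f_{\ell+1}$ valid for all $B$ to the pointwise inequality $f_\ell \le f_{\ell+1}$ a.e.; everything else is a bookkeeping application of continuity of measure and monotone convergence. This step must be stated with care, because it is exactly what licenses the use of monotone convergence: without the monotonicity of $(f_\ell)$ one could only control the convergence of the integrals and not assert almost-everywhere existence of the pointwise limit. I expect no other obstacle.
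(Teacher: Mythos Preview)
Your argument is correct. Note, however, that the paper itself does not prove this proposition: it is quoted as a tool from \cite{Chen-Wei} and invoked as a black box in the course of establishing the absolute continuity of $\tilde W_\infty$. So there is no ``paper's own proof'' to compare against.

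Your proof is the standard one, and the step you flag as delicate is indeed the only nontrivial point: the passage from $\int_B f_\ell \le \int_B f_{\ell+1}$ for all Borel $B$ to $f_\ell \le f_{\ell+1}$ almost everywhere. One small remark worth making explicit: when you test on $B=\{f_\ell > f_{\ell+1}\}$ and conclude that $\int_B (f_\ell - f_{\ell+1}) \le 0$ with a strictly positive integrand forces $|B|=0$, you are implicitly using that both $f_\ell$ and $f_{\ell+1}$ are integrable, so that the difference of integrals is well defined. That is justified here because $\int_{\mathbb R} f_\ell(x)\,dx = \mathbb{P}(\Omega_\ell) \le 1$ for every $\ell$. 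With that in hand, the rest---continuity of measure from below together with the monotone convergence theorem---is exactly as you describe.
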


  Let
$(\Omega,\mathcal F,\mathcal{P})$ be a probability space. Suppose
that there exists a constant $A$ such that, we have almost surely,
$X\le A$. \begin{lem} Define the events
\begin{equation*}
\Omega_{\ell}:=\{W_\ell\ge m A \;\mbox{and}\;B_\ell\ge mA\},
\end{equation*}
then, $(\Omega_{\ell})_{\ell\geq 0}$ is a sequence of increasing
events, moreover we  have $\mathbb{P}(\cup_{\ell \ge
0}\Omega_\ell)=1$.
\end{lem}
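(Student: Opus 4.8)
The plan is to establish the two assertions in turn, both resting on the elementary fact that, in this model, $W_n$ and $B_n$ are nondecreasing. From the recursions $W_{n+1}=W_n+X_{n+1}\xi_{n+1}$ and $B_{n+1}=B_n+X_{n+1}(m-\xi_{n+1})$ of Theorem~\ref{thmXself}, together with $X_{n+1}>0$ and $0\le\xi_{n+1}\le m$, we get $W_{n+1}\ge W_n$ and $B_{n+1}\ge B_n$ almost surely. Hence if $\omega\in\Omega_\ell$, that is $W_\ell(\omega)\ge mA$ and $B_\ell(\omega)\ge mA$, then $W_{\ell+1}(\omega)\ge W_\ell(\omega)\ge mA$ and $B_{\ell+1}(\omega)\ge B_\ell(\omega)\ge mA$, so $\omega\in\Omega_{\ell+1}$. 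This proves $\Omega_\ell\subseteq\Omega_{\ell+1}$ for all $\ell\ge 0$.

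For the second assertion, note that since the sets $\{W_\ell\ge mA\}$ and $\{B_\ell\ge mA\}$ are each increasing in $\ell$ (by the monotonicity just proved), we have $\bigcup_{\ell\ge0}\Omega_\ell=\{\sup_\ell W_\ell\ge mA\}\cap\{\sup_\ell B_\ell\ge mA\}=\{\lim_\ell W_\ell\ge mA\}\cap\{\lim_\ell B_\ell\ge mA\}$, where $mA$ is a fixed finite constant. Therefore it suffices to show that $W_n\to+\infty$ and $B_n\to+\infty$ almost surely; by the symmetry between white and blue it is enough to treat $W_n$. Being nondecreasing and integer valued, $W_n$ either tends to $+\infty$ or is eventually constant, and the latter event is contained in $\bigcup_{k\ge0}E_k$ with $E_k:=\{\xi_j=0\text{ for all }j>k\}$. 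So the claim reduces to showing $\mathbb{P}(E_k)=0$ for every $k$.

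To estimate $\mathbb{P}(E_k)$, recall that $T_n=W_n+B_n$ holds for all $n$, so on $E_k$ one has $B_n=T_n-W_k$ for $n\ge k$, and a direct computation (using $W_k\ge W_0\ge 1$, and interpreting $\binom{B_n}{m}=0$ when $B_n<m$) gives, for $n\ge k$,
\[
\mathbb{P}(\xi_{n+1}\ge 1\mid\mathcal{F}_n)=1-\prod_{i=0}^{m-1}\Bigl(1-\frac{W_k}{T_n-i}\Bigr)\ \ge\ \frac{W_k}{T_n}\ \ge\ \frac{1}{T_n}.
\]
Since $X\le A$ almost surely, $T_n\le T_0+mAn$, so $\sum_{n\ge k}\mathbb{P}(\xi_{n+1}\ge 1\mid\mathcal{F}_n)=+\infty$ on $E_k$. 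By the conditional (Lévy) form of the Borel--Cantelli lemma, $\{\xi_{n+1}\ge 1\text{ infinitely often}\}$ coincides almost surely with $\{\sum_n\mathbb{P}(\xi_{n+1}\ge 1\mid\mathcal{F}_n)=+\infty\}$, so almost surely on $E_k$ there are infinitely many $n$ with $\xi_{n+1}\ge 1$ --- contradicting the definition of $E_k$. Hence $\mathbb{P}(E_k)=0$, so $W_n\to+\infty$ a.s.; the same argument with the colours exchanged gives $B_n\to+\infty$ a.s., and the lemma follows. The main obstacle is precisely this last step, namely ruling out that one colour count stays bounded forever: this is where the mechanics of the draw (and the standing assumption $W_0\ge 1$, $B_0\ge 1$, without which one colour never appears) genuinely enter, and it is handled by the conditional Borel--Cantelli argument above rather than by the asymptotics of Theorem~\ref{thmXself}, whose limiting random variables are a priori only nonnegative.
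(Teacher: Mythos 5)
Your proof is correct. There is, in fact, nothing in the paper to compare it against: the lemma is stated without any argument, the surrounding text only deferring the absolute-continuity discussion to Theorem 4.2 of \cite{Chen-Wei}, so you are supplying a genuinely missing proof. Both halves of your argument are sound. The monotonicity $\Omega_\ell\subseteq\Omega_{\ell+1}$ does follow from $X_{n+1}>0$ and $0\le\xi_{n+1}\le m$ in the recursion of Theorem~\ref{thmXself}, and the identity $\bigcup_\ell\Omega_\ell=\{\sup_\ell W_\ell\ge mA\}\cap\{\sup_\ell B_\ell\ge mA\}$ is legitimate precisely because both families of events are increasing. The substantive step is the conditional Borel--Cantelli argument: on $E_k$ one has $W_n=W_k\ge 1$ and $T_n\le T_0+mAn$, so $\sum_n\mathbb{P}(\xi_{n+1}\ge 1\mid\mathcal{F}_n)\ge\sum_n W_k/T_n=\infty$ there, while $E_k$ is disjoint from $\{\xi_{n+1}\ge 1\ \text{infinitely often}\}$; L\'evy's extension of Borel--Cantelli makes these two events coincide almost surely, forcing $\mathbb{P}(E_k)=0$. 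The elementary bound $1-\prod_{i=0}^{m-1}\bigl(1-W_k/(T_n-i)\bigr)\ge W_k/T_n$ checks out, including the degenerate case $B_n<m$ where the conditional probability is $1$. You are also right to make explicit the hypothesis $W_0\ge 1$ and $B_0\ge 1$: in this self-reinforcing scheme a colour absent at time $0$ remains absent forever, so the lemma is false without it, and the paper's assumption that ``the urn is non empty at time $0$'' leaves this implicit.
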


Next, we just need to show that the restriction of $\tilde
W_\infty$ on $\Omega_{\ell,j}=\{\omega; W_\ell(\omega)=j\}$ has a
density for each $j$, with $Am\leq j\leq T_{\ell-1}.$ Let
$(p_c)_{c\in\text{supp}(X)}$  the distribution of $X$.


\begin{lem}
For a fixed $\ell>0$, there exists a positive constant $\kappa$,
such that, for every $c\in\text{supp(X)}$, $n\ge \ell+1$, $Am\le
j\le T_{\ell-1}$ and $k\le Am(n+1)$, we have

\begin{equation}
\label{Inequality_WEI} \sum_{i=0}^m
\mathbb{P}(W_{n+1}=j+k|W_n=j+k-ci)\le p_c(1-\frac
1n+\frac{\kappa}{n^2}).
\end{equation}

\end{lem}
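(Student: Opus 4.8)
The plan is to write the one‑step transition probability explicitly from the recursion \eqref{W-self} and then expand it to second order in $1/n$. Conditionally on $\mathcal{F}_n$ the white increment equals $X_{n+1}\xi_{n+1}$, where $X_{n+1}$ is independent of $\mathcal{F}_n$ with law $(p_c)_{c\in\text{supp}(X)}$ and $\xi_{n+1}$ is hypergeometric with parameters $(W_n,B_n,m)$; hence on $\Omega_{\ell,j}$ the pair $(W_n,T_n)$ is a Markov chain. Writing $t$ for the value of $T_n$ on the event considered and $w_0:=j+k$, the identity $W_{n+1}=W_n+X_{n+1}\xi_{n+1}$ gives
\[
\mathbb{P}(W_{n+1}=j+k\mid W_n=j+k-ci)=\sum_{c'}p_{c'}\,\frac{\binom{w_0-ci}{\,ci/c'}\binom{t-w_0+ci}{\,m-ci/c'}}{\binom{t}{m}},
\]
where the sum runs over $c'\in\text{supp}(X)$ with $c'\mid ci$ and $0\le ci/c'\le m$ (the probability is $0$ when $k<ci$). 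The term $c'=c$ equals $p_c\,\mathbb{P}(\xi_{n+1}=i\mid W_n=w_0-ci)$ and provides the main contribution; the remaining finitely many terms are of lower order and will be absorbed into the $\kappa/n^{2}$.

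For the leading term I would set $h(i):=\binom{w_0}{i}\binom{t-w_0}{m-i}/\binom{t}{m}$, so that $\sum_{i=0}^{m}h(i)=1$ (this is the hypergeometric weight with white count $w_0$). Expanding the ratios of binomial coefficients,
\[
\frac{\binom{w_0-ci}{i}}{\binom{w_0}{i}}=1-\frac{ci^{2}}{w_0}+O\!\left(\tfrac{1}{n^{2}}\right),\qquad
\frac{\binom{t-w_0+ci}{m-i}}{\binom{t-w_0}{m-i}}=1+\frac{ci(m-i)}{t-w_0}+O\!\left(\tfrac{1}{n^{2}}\right),
\]
valid once $w_0$ and $t-w_0$ are of order $n$, one gets $\mathbb{P}(\xi_{n+1}=i\mid W_n=w_0-ci)=h(i)\left(1-\tfrac{ci^{2}}{w_0}+\tfrac{ci(m-i)}{t-w_0}\right)+O(1/n^{2})$. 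Summing over $i$ and inserting the hypergeometric moments (for $\xi$ with white count $w_0$, $\mathbb{E}\xi=mw_0/t$, $\Var\xi=m\tfrac{w_0}{t}\tfrac{t-w_0}{t}\tfrac{t-m}{t-1}$, whence $\mathbb{E}\xi^{2}$ and $\mathbb{E}[\xi(m-\xi)]$), the terms depending on $w_0/t$ cancel against one another and one is left with
\[
\sum_{i=0}^{m}\mathbb{P}(\xi_{n+1}=i\mid W_n=w_0-ci)=1-\frac{cm(t-m)}{t(t-1)}+O\!\left(\tfrac{1}{n^{2}}\right)=1-\frac{cm}{T_n}+O\!\left(\tfrac{1}{n^{2}}\right).
\]
Multiplying by $p_c$, adding the $O(p_c/n^{2})$ coming from the terms $c'\neq c$, and using the hypotheses $Am\le j\le T_{\ell-1}$, $k\le Am(n+1)$ and $1\le X\le A$ to control $T_n$ (so that $cm/T_n\ge 1/n-O(1/n^{2})$ uniformly on the admissible range) then yields the asserted inequality, with a single constant $\kappa$ depending only on $\ell$, $m$, $A$ and the law of $X$.

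The real work, and the only genuinely delicate point, is uniformity: the binomial expansions above are honest only while the arguments $w_0-ci$, $i$ and $t-w_0+ci$ all stay comparable to $n$. So the bulk of the argument is to check, from $Am\le j\le T_{\ell-1}$, $k\le Am(n+1)$ and $n\ge\ell+1$, that on $\Omega_{\ell,j}$ the white and blue counts $w_0-ci$ and $t-w_0+ci$ lie between fixed positive multiples of $n$ throughout the allowed range of $(c,i,j,k)$, and then to bound uniformly both the Taylor remainders and the contributions of the terms $c'\neq c$ by a common $C/n^{2}$. Having fixed $\ell$ and working inside $\Omega_{\ell,j}$ is precisely what makes all these bounds uniform.
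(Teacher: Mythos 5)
Your treatment of the diagonal contribution ($c'=c$) is correct and in fact more self-contained than the paper's: you Taylor-expand the shifted hypergeometric weights and use the first two hypergeometric moments to obtain $\sum_{i=0}^m\mathbb{P}(\xi_{n+1}=i\mid W_n=w_0-ci)=1-\frac{cm}{T_n}\frac{T_n-m}{T_n-1}+O(n^{-2})$, whereas the paper reaches the same quantity by invoking the exact combinatorial identity of Lemma 4.1 of Chen and Wei (Equation \eqref{step2}), which writes $\sum_{i=0}^m\binom{j+c(k-i)}{i}\binom{T_n-j-c(k-i)}{m-i}$ as a degree-$m$ polynomial in $T_n$ with explicit top coefficients and then divides by $\binom{T_n}{m}$. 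That difference of route is harmless.

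The genuine gap is your claim that the off-diagonal terms $c'\neq c$ ``are of lower order and will be absorbed into the $\kappa/n^2$''. They are of order $1$, not $O(n^{-2})$. Already for $i=0$ your own exact formula gives $\sum_{c'}p_{c'}\binom{t-w_0}{m}\binom{t}{m}^{-1}=\binom{t-w_0}{m}\binom{t}{m}^{-1}\approx(1-w_0/t)^m$, of which only the fraction $p_c$ is diagonal; the leftover $(1-p_c)(1-w_0/t)^m$ is macroscopic. Likewise, for $i\geq 1$ and any $c'\in\mathrm{supp}(X)$ with $c'\mid ci$ and $ci/c'\leq m$, the term $p_{c'}h(ci/c')$ is of order $1$. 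Summing your expression therefore produces $p_c\bigl(1-\frac{cm}{T_n}\bigr)$ plus positive order-one terms, which exceeds $p_c$ and would contradict the very inequality you are proving. What the paper actually bounds (see the first line of \eqref{Majoration1}, where the global factor $p_c$ appears) is $\sum_{i=0}^m\mathbb{P}(W_{n+1}=j+k,\ X_{n+1}=c\mid W_n=j+k-ci)$: for each $i$ only the event $\{X_{n+1}=c,\ \xi_{n+1}=i\}$ is retained, the cross events being accounted for elsewhere when the recursion for $v_{n,j}$ is later summed over $c$. Your proof must adopt this reading (or restructure the bookkeeping of the double sum over pairs $(c',i')$ with $c'i'=ci$) rather than discard those terms. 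A secondary point, which you share with the paper but at least flag, is the passage from $cm/T_n$ to $1/n$: since $T_n\leq T_0+Amn$ while $c$ may equal $\min\mathrm{supp}(X)$, the inequality $cm/T_n\geq 1/n-O(n^{-2})$ is not uniform over $c$ and needs an argument, not an assertion.
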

\begin{proof}
According to  Lemma 4.1 \cite{Chen-Wei},
 for $Am \leq
j\leq T_{\ell -1}$, $n\geq \ell$ and $k\leq Am(n+1)$, the
following holds:
\begin{equation}\label{step2}\sum_{i=0}^m{j+c(k-i)\choose
i}{T_n-j-c(k-i)\choose
m-i}=\frac{T_n^m}{m!}+\frac{(1-m-2c)T_n^{m-1}}{2(m-1)!}+...,\end{equation}
which is a polynomial in $T_n$ of degree $m$ with coefficients
depending  on $W_0, B_0, m$ and $c$ only.\\

Let $u_{n,k}(c)=\sum_{i=0}^m \mathbb{P}(W_{n+1}=j+k|W_n=j+k-ic)$.
Applying Equation (\ref{step2}) to our model we have
\begin{eqnarray}
\label{Majoration1} u_{n,k}(c)&=&p_c\sum_{i=0}^m{j+k\choose
i}{T_n-j-k\choose m-i}{T_n\choose m}^{-1} \nonumber
\\
&=&p_c{T_n\choose m}^{-1}\Big(\frac{T_n^{m}}{m!}+
\frac{(1-m-2c)}{(m-1)!}T_n^{m-1}+\ldots\Big)\Big(\frac{T_n^m}{m!}+\frac{(1-m)}{2(m-1)!}T_n^{m-1}+\ldots\Big)^{-1}\nonumber
\\
&\stackrel{a.s}{=}&
p_c\Big(1-\frac{1}{n}+O\Big(\frac{1}{n^2}\Big)\Big).
\end{eqnarray}
\end{proof}

 Later, we will limit the proof by mentioning the main differences with
 Lemma 4.1 \cite{Chen-Wei}. For a fixed $\ell$ and $n\ge \ell+1$, we denote by $v_{n,j}=\displaystyle\max_{0\leq k\leq
Amn}\mathbb{P}\bigl(W_{\ell+n}=j+k|W_\ell=j\bigr)$.  We have the
following inequality:
\begin{eqnarray*}
v_{n+1,j}&\le & \max_{0\le k\le
Am(n+1)}\Big\{\sum_{i=0}^m\sum_{c\in\text{supp}{(X)}}\mathbb{P}(W_{\ell+
n+1}=j+k|W_{\ell+n}=j+k-ci)\Big\}\nonumber
\\
&\le& \max_{0\le k\le Am(n+1)}\Big\{\sum_{i=0}^m\sum_{c\in\text{supp}{(X)}}\mathbb{P}(W_{\ell+n+1}=j+k|W_{\ell+n}=j+k-ci)\nonumber\\
&&\times \mathbb{P}(W_{\ell+n}=j+k-ci|W_\ell=j)\Big\}\nonumber\\
&\le&\max_{0\le k\le
Am(n+1)}\sum_{i=0}^m\sum_{c\in\text{supp}{(X)}}\mathbb{P}(W_{\ell+n+1}=j+k|W_{\ell+n}=j+k-ci)\\
&&\times \max_{0\leq \tilde k\leq
Amn}\mathbb{P}\bigl(W_{\ell+n}=j+\tilde
k|W_\ell =j\bigr)\\
&\le&\sum_{c\in\text{supp}{(X)}}p_c\Big(1-\frac{1}{n+l}+\frac{\kappa}{(n+l)^2}\Big)v_{n,j}\\&&=\Big(1-\frac{1}{n+l}+\frac{\kappa}{(n+l)^2}\Big)v_{n,j}.
\end{eqnarray*}
This implies that there exists some positive constant $C(\ell)$,
depending on $\ell$ only, such that, for a fixed $\ell$ and for
all $n\ge \ell+1$, we get
\begin{equation}
\max_{0\leq k\leq
m(n-l)}\mathbb{P}\bigl(W_n=j+k|W_l=j\bigr)\le\prod_{i=\ell}^n\Big(1-\frac1i+\frac{\kappa}{i^2}\Big)\le
\frac{C(\ell)}{n}.
\end{equation}
The rest of the proof follows.

\end{proof}

 \begin{proof}[Proof of Theorem \ref{thmXYopp}]
Consider the urn  model evolving by the matrix $Q_n=
\begin{pmatrix}
 0 & X_n \\
  Y_n & 0 \\
\end{pmatrix}$. According to Equation (\ref{recurrence}), we have
the following recursions:
\begin{equation}\label{opposite-rec}W_{n+1}=W_n+X_{n+1}(m-\xi_{n+1})\quad \text{and}\quad T_{n+1}=T_n+mX_{n+1}+\xi_{n+1}(Y_{n+1}-X_{n+1}).\end{equation}

\begin{lem}
 The proportion of white balls after $n$ draws, $Z_n$, satisfies the stochastic algorithm
defined by (\ref{eq:algo_sto}), where
$f(x)=m(\mu_Y-\mu_X)x^2-2\mu_Xmx+\mu_Xm$, $\gamma_n=\frac{1}{T_n}$
and $\Delta M_{n+1}=D_{n+1}-\mathbb{E}[D_{n+1}|\mathcal{F}_n]$,
 with $D_{n+1}=\xi_{n+1}(Z_n(X_{n+1}-Y_{n+1})-X_{n+1})+mX_{n+1}$.
 \end{lem}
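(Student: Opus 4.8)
The plan is to follow verbatim the route used for Theorem~\ref{thmXopp}: recast the coupled recursions \eqref{opposite-rec} as a single stochastic approximation recursion for the proportion $Z_n=W_n/T_n$, and then verify the four conditions (i)--(iv) of Definition~\ref{def-algo}, using the $L^2$ version $\mathbb E[\Delta M_{n+1}^2\mid\mathcal F_n]\le K_u$ of (ii) allowed by the remark following Theorem~\ref{th:renlund} (the original form $|\Delta M_n|\le K_u$ cannot hold here since $X,Y$ are unbounded). Concretely, writing $Z_{n+1}-Z_n=(W_{n+1}T_n-W_nT_{n+1})/(T_nT_{n+1})$ and inserting $W_{n+1}=W_n+X_{n+1}(m-\xi_{n+1})$ and $T_{n+1}=T_n+mX_{n+1}+\xi_{n+1}(Y_{n+1}-X_{n+1})$, the $W_nT_n$ terms cancel and division of the numerator by $T_n$ (with $W_n/T_n=Z_n$) gives
\[
T_{n+1}\,(Z_{n+1}-Z_n)=X_{n+1}(m-\xi_{n+1})-Z_n\bigl(mX_{n+1}+\xi_{n+1}(Y_{n+1}-X_{n+1})\bigr)=:D_{n+1},
\]
which is of the form \eqref{eq:algo_sto} with $\gamma_{n+1}=1/T_{n+1}$. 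Since $X_{n+1},Y_{n+1}$ are independent of $\mathcal F_n$ and of $\xi_{n+1}$, and $\xi_{n+1}$ is hypergeometric with $\mathbb E[\xi_{n+1}\mid\mathcal F_n]=mZ_n$, taking the $\mathcal F_n$-conditional expectation of $D_{n+1}$ (replace $X_{n+1},Y_{n+1}$ by $\mu_X,\mu_Y$ and $\xi_{n+1}$ by $mZ_n$) produces the quadratic drift $f(Z_n)$, and $\Delta M_{n+1}:=D_{n+1}-\mathbb E[D_{n+1}\mid\mathcal F_n]$ is then a martingale increment by construction.

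Conditions (i)--(iii) are routine. For (iii), $f$ is a fixed degree-$2$ polynomial and $Z_n\in[0,1]$, so $|f(Z_n)|\le K_f$ for an explicit constant. For (ii), note $0\le T_{n+1}-T_n=(m-\xi_{n+1})X_{n+1}+\xi_{n+1}Y_{n+1}\le m(X_{n+1}+Y_{n+1})$ and $0\le W_{n+1}-W_n\le T_{n+1}-T_n$, hence $|D_{n+1}|\le 2m(X_{n+1}+Y_{n+1})$; taking conditional second moments and using $\mathbb E[X^2]=\sigma_X^2+\mu_X^2<\infty$ and $\mathbb E[Y^2]=\sigma_Y^2+\mu_Y^2<\infty$ gives $\mathbb E[\Delta M_{n+1}^2\mid\mathcal F_n]\le\mathbb E[D_{n+1}^2\mid\mathcal F_n]\le K_u$. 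For (i), with $\gamma_n=1/T_n$: the increments $T_{n+1}-T_n$ take values in a fixed set of positive reals and are thus a.s.\ bounded below by some $\delta_0>0$, so $T_n\ge\delta_0 n$ and $\gamma_n\le c_u/n$; the matching lower bound $\gamma_n\ge c_l/n$ requires $T_n=O(n)$ a.s., which holds because $\mathbb E[T_{n+1}-T_n\mid\mathcal F_n]=m\mu_X+mZ_n(\mu_Y-\mu_X)$ and $\mathbb{V}ar(T_{n+1}-T_n\mid\mathcal F_n)$ are both bounded, so the martingale $\sum_{i\le n}\bigl(T_i-T_{i-1}-\mathbb E[T_i-T_{i-1}\mid\mathcal F_{i-1}]\bigr)$ is $o(n)$ a.s.\ and hence $T_n/n$ is a.s.\ bounded, the bounds on $\gamma_n$ then holding for all $n$ large enough.

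The only step requiring genuine care is (iv), because $\gamma_{n+1}=1/T_{n+1}$ is not $\mathcal F_n$-measurable and is correlated with $\Delta M_{n+1}$, so it cannot be pulled out of the conditional expectation. I would write $\dfrac1{T_{n+1}}=\dfrac1{T_n}-\dfrac{T_{n+1}-T_n}{T_nT_{n+1}}$ and use $\mathbb E[\Delta M_{n+1}\mid\mathcal F_n]=0$ to obtain
\[
\mathbb E\bigl[\gamma_{n+1}\Delta M_{n+1}\mid\mathcal F_n\bigr]
=-\frac1{T_n}\,\mathbb E\!\left[\frac{(T_{n+1}-T_n)\,\Delta M_{n+1}}{T_{n+1}}\;\middle|\;\mathcal F_n\right].
\]
Bounding $1/T_{n+1}\le 1/T_n$ and applying the Cauchy--Schwarz inequality with the $L^2$ bound on $\Delta M_{n+1}$ from (ii) and $\mathbb E[(T_{n+1}-T_n)^2\mid\mathcal F_n]\le 2m^2(\mathbb E[X^2]+\mathbb E[Y^2])$, the inner conditional expectation is at most $C/T_n$, so $\bigl|\mathbb E[\gamma_{n+1}\Delta M_{n+1}\mid\mathcal F_n]\bigr|\le C/T_n^2=C\,\gamma_n^2$, which is (iv) with $K_e=C$. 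Once (i)--(iv) are verified, the lemma follows.
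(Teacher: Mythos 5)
Your proof follows the same overall route as the paper's: derive the exact recursion $Z_{n+1}-Z_n=T_{n+1}^{-1}D_{n+1}$ from \eqref{opposite-rec} and then verify conditions (i)--(iv) of Definition~\ref{def-algo}, using the $L^2$ form of (ii). There are, however, two points where you genuinely diverge, both to your credit. First, your algebra gives $D_{n+1}=\xi_{n+1}\bigl(Z_n(X_{n+1}-Y_{n+1})-X_{n+1}\bigr)+mX_{n+1}(1-Z_n)$ and hence $f(x)=m(\mu_X-\mu_Y)x^2-2m\mu_X x+m\mu_X$, whereas the lemma as printed omits the factor $(1-Z_n)$ in the last term of $D_{n+1}$ and carries the opposite sign on the quadratic coefficient of $f$. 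Your version is the correct one: it is the one whose unique zero in $[0,1]$ is $z=\sqrt{\mu_X}/(\sqrt{\mu_X}+\sqrt{\mu_Y})$ with $f'(z)=-2m\sqrt{\mu_X\mu_Y}<0$, which is exactly what the paper uses in the subsequent proposition and CLT; the printed $f$ evaluated at $z$ equals $2m\mu_X(\mu_Y-\mu_X)/(\sqrt{\mu_X}+\sqrt{\mu_Y})^2$, which vanishes only when $\mu_X=\mu_Y$. Second, for condition (iv) the paper simply writes $\mathbb{E}[T_{n+1}^{-1}\Delta M_{n+1}\mid\mathcal{F}_n]\le T_n^{-1}\mathbb{E}[\Delta M_{n+1}\mid\mathcal{F}_n]=0$, which is not a valid step because $\Delta M_{n+1}$ changes sign; your decomposition $T_{n+1}^{-1}=T_n^{-1}-(T_{n+1}-T_n)/(T_nT_{n+1})$ followed by Cauchy--Schwarz is the argument actually needed to obtain $\bigl|\mathbb{E}[\gamma_{n+1}\Delta M_{n+1}\mid\mathcal{F}_n]\bigr|\le C\gamma_n^2$. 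The one caveat in your write-up is in (i): the claim that the increments $T_{n+1}-T_n$ are a.s.\ bounded below by a fixed $\delta_0>0$ needs the supports of $X$ and $Y$ to be bounded away from $0$ (automatic if the added numbers of balls are positive integers, as the model intends); in general one should instead use $T_n\ge m\sum_{i\le n}\min(X_i,Y_i)$ together with the strong law and $\mathbb{E}[\min(X,Y)]>0$ to get $T_n\ge cn$ eventually --- the same repair that the paper's own bound $T_n\ge m n\min_{1\le i\le n}(X_i,Y_i)$ requires.
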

 \begin{proof}
 We check the conditions of Definition \ref{def-algo}, indeed,
 \begin{description}
 \item[(i)] recall that
 $T_n=T_0+m\sum_{i=1}^nX_i+\sum_{i=1}^n\xi_i(Y_i-X_i)$, then $\frac{T_n}{n}\leq\frac{T_0}{n}+\frac{m}{n}\sum_{i=1}^nX_i+\frac{m}{n}\sum_{i=1}^n|Y_i-X_i|.$
 By the strong law of large numbers we have $\frac{T_n}{n}\leq
 m(\mu_X+\mu_{|Y-X|})+1$. On the other hand, we have  $T_n\geq \displaystyle\min_{1\leq i\leq n}(X_i, Y_i) m n,$
   thus, the following bound holds
   \begin{equation*}\frac{1}{(m(\mu_X+\mu_{|Y-X|})+1)n}\leq \frac{1}{T_n}\leq \frac{1}{m \displaystyle\min_{1\leq i\leq n}(X_i,Y_i) n},\end{equation*}
   then $c_l=\frac{1}{(m(\mu_X+\mu_{|Y-X|})+1)n}$ and $c_u=\frac{1}{m \displaystyle\min_{1\leq i\leq n}(X_i,Y_i)},$\\
 \item[(ii)] $\mathbb{E}[\Delta M_{n+1}^2|\mathcal{F}_n]
  \leq
  (\mu_{(X-Y)^2}+3\mu_X)(m+m^2)+5m^2\mu_{X^2}+2m^2\mu_X\mu_Y+m^2(|\mu_X-\mu_Y|+3\mu_X)=K_u,$
 \item[(iii)]$|f(Z_n)|\leq
m(|\mu_Y-\mu_X|+3\mu_X)=K_f,$
 \item[(iv)] $\mathbb{E}[\frac{1}{T_{n+1}}\Delta
M_{n+1}|\mathcal{F}_n]\leq \frac{1}{T_n}\mathbb{E}[\Delta
M_{n+1}|\mathcal{F}_n]=0$
 \end{description}
 \end{proof}

 \begin{prop}
The proportion of white balls in the urn after $n$ draws, $Z_n$,
satisfies as $n$ tends to infinity
\begin{equation}\label{conv-proportion}Z_n\stackrel{a.s}{\longrightarrow}z:=\frac{\sqrt{\mu_X}}{\sqrt{\mu_X}+\sqrt{\mu_Y}}.\end{equation}
\end{prop}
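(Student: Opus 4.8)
The plan is to apply Theorem~\ref{th:renlund} to the stochastic approximation algorithm satisfied by $Z_n$, which was established in the preceding lemma, with drift function $f(x)=m(\mu_Y-\mu_X)x^2-2\mu_Xmx+\mu_Xm$. Since $f$ is a polynomial, it is continuous, so Theorem~\ref{th:renlund} guarantees that $\lim_{n\to\infty}Z_n$ exists almost surely and lies in $Q_f=\{x\in[0,1]:f(x)=0\}$. The first step is therefore to solve $f(x)=0$ explicitly and identify which root lies in $[0,1]$ and which root is the stable zero.

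Concretely, I would compute the roots of the quadratic $m(\mu_Y-\mu_X)x^2-2\mu_Xmx+\mu_Xm=0$, i.e.\ $(\mu_Y-\mu_X)x^2-2\mu_Xx+\mu_X=0$. A clean way to do this is to observe that $z=\frac{\sqrt{\mu_X}}{\sqrt{\mu_X}+\sqrt{\mu_Y}}$ satisfies $\frac{1-z}{z}=\sqrt{\mu_Y/\mu_X}$, hence $\mu_X(1-z)^2=\mu_Y z^2$, which rearranges to $(\mu_Y-\mu_X)z^2-2\mu_X z(-1)\cdot\tfrac12\dots$; more directly, expanding $\mu_X(1-z)^2-\mu_Y z^2=0$ gives $\mu_X-2\mu_X z+\mu_X z^2-\mu_Y z^2 = 0$, which is exactly $(\mu_Y-\mu_X)z^2-2\mu_X z+\mu_X$ up to sign, so $f(z)=0$. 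The other root $z'$ of the quadratic satisfies $z z' = \frac{\mu_X}{\mu_Y-\mu_X}$ (product of roots) when $\mu_X\neq\mu_Y$, and one checks $z'=\frac{\sqrt{\mu_X}}{\sqrt{\mu_X}-\sqrt{\mu_Y}}\notin[0,1]$ (it is negative when $\mu_X<\mu_Y$ and exceeds $1$ when $\mu_X>\mu_Y$); when $\mu_X=\mu_Y$ the equation is linear with unique root $z=1/2$, matching the formula. Thus $z$ is the only zero of $f$ in $[0,1]$.

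Since $Z_n\in[0,1]$ for all $n$ and its almost sure limit must lie in $Q_f\cap[0,1]=\{z\}$, we conclude $Z_n\stackrel{a.s}{\longrightarrow}z$. To be thorough I would also verify that $z$ is a stable zero, which is consistent with the picture: $f'(x)=2m(\mu_Y-\mu_X)x-2\mu_X m$, and evaluating $f'(z)=2m\bigl((\mu_Y-\mu_X)z-\mu_X\bigr)$; using $(\mu_Y-\mu_X)z^2=2\mu_X z-\mu_X$ one gets $(\mu_Y-\mu_X)z = \frac{2\mu_X z-\mu_X}{z}=2\mu_X-\frac{\mu_X}{z}=2\mu_X-(\sqrt{\mu_X}+\sqrt{\mu_Y})\sqrt{\mu_X}=\mu_X-\sqrt{\mu_X\mu_Y}$, so $f'(z)=2m(-\sqrt{\mu_X\mu_Y})<0$, confirming stability. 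This also records the value $\hat\gamma$-type quantity needed later for the central limit theorem in claim~2.

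The main obstacle is not conceptual but bookkeeping: one must be careful that the lemma's verification of conditions (i)--(iv) of Definition~\ref{def-algo} indeed holds almost surely (in particular that $\min_{1\le i\le n}(X_i,Y_i)$ stays bounded below, which is where the hypothesis that $X,Y$ are \emph{strictly positive} integer-valued random variables is used: $\min(X_i,Y_i)\ge 1$), so that $c_u=\frac{1}{m\min_i(X_i,Y_i)}\le\frac1m$ is a genuine finite constant. Granting the lemma as stated, the proof of the proposition is a direct invocation of Theorem~\ref{th:renlund} together with the elementary root computation above.
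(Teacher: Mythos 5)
Your overall strategy is exactly the paper's: invoke Theorem~\ref{th:renlund} for the stochastic approximation satisfied by $Z_n$ and identify $z$ as the unique stable zero of the drift in $[0,1]$; the paper asserts this in one line, while you supply the root-finding and the stability check $f'(z)=-2m\sqrt{\mu_X\mu_Y}<0$, which is the right instinct. The gap is in the central computation. The identity $\mu_X(1-z)^2=\mu_Yz^2$ expands to $(\mu_X-\mu_Y)z^2-2\mu_Xz+\mu_X=0$, whereas the drift you take from the lemma is $m\bigl[(\mu_Y-\mu_X)x^2-2\mu_Xx+\mu_X\bigr]$. These two quadratics differ in the sign of the \emph{leading coefficient only}, so they are not equal ``up to sign'' and do not have the same zeros; the step ``so $f(z)=0$'' is false for the stated $f$. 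Concretely, for $\mu_X=1$, $\mu_Y=4$ the stated $f/m=3x^2-2x+1$ has negative discriminant, hence $Q_f\cap[0,1]=\emptyset$ and the argument as written collapses. Your own bookkeeping betrays the inconsistency: you quote the product of roots as $\mu_X/(\mu_Y-\mu_X)$, but the $z'=\sqrt{\mu_X}/(\sqrt{\mu_X}-\sqrt{\mu_Y})$ you then use satisfies $zz'=\mu_X/(\mu_X-\mu_Y)$.

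The repair is to recompute the drift from the recursion \eqref{opposite-rec} instead of taking the lemma's formula on faith. Writing $Z_{n+1}-Z_n=\frac{1}{T_{n+1}}\bigl(\Delta W_{n+1}-Z_n\,\Delta T_{n+1}\bigr)$ with $\Delta W_{n+1}=X_{n+1}(m-\xi_{n+1})$ and $\Delta T_{n+1}=mX_{n+1}+\xi_{n+1}(Y_{n+1}-X_{n+1})$, and using $\mathbb{E}[\xi_{n+1}|\mathcal F_n]=mZ_n$, the conditional drift is
\begin{equation*}
f(x)=m\bigl[\mu_X(1-x)^2-\mu_Yx^2\bigr]=m(\mu_X-\mu_Y)x^2-2m\mu_Xx+m\mu_X,
\end{equation*}
i.e.\ the lemma's displayed $f$ carries a sign typo in the quadratic term. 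With this corrected $f$ every step of your argument is sound: $z$ is the unique zero in $[0,1]$ (the other root $\sqrt{\mu_X}/(\sqrt{\mu_X}-\sqrt{\mu_Y})$ lies outside $[0,1]$ when $\mu_X\neq\mu_Y$, and the case $\mu_X=\mu_Y$ degenerates to the linear equation with root $\tfrac12=z$), one has $f'(z)=-2m\sqrt{\mu_X\mu_Y}<0$, and Theorem~\ref{th:renlund} yields the conclusion. So the defect is localized but genuine: you noticed a sign mismatch and dismissed it with an ``up to sign'' that does not hold, rather than tracing it back to the drift computation. Your side remark that strict positivity of $X,Y$ (integer-valued, hence $\min_i(X_i,Y_i)\ge1$) is what makes $c_u$ a finite constant in condition (i) is a correct and worthwhile observation.
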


\begin{proof}

The proportion of white balls in the urn satisfies the stochastic
approximation algorithm defined in (\ref{eq:algo_sto}). As the
function $f$ is continuous, by Theorem \ref{th:renlund}, the
process $Z_n$ converges almost surely to
 $z=\frac{\sqrt{\mu_X}}{\sqrt{\mu_X}+\sqrt{\mu_Y}}$,
 the unique zero of $f$ with negative derivative.

\end{proof}
Next, we  give an estimate of $T_n$, the total number of balls in
the urn after $n$ draws, in order to describe the asymptotic of
the urn composition. By Equation (\ref{opposite-rec}), we have

\begin{equation*}\frac{T_n}{n}=\frac{T_0}{n}+\frac{m}{n}\sum_{i=1}^nX_i
+\frac{m(\mu_Y-\mu_X)}{n}\sum_{i=1}^nZ_{i-1}+\frac{1}{n}\sum_{i=1}^n\Big[\xi_i(Y_i-X_i)-\mathbb{E}[\xi_i(Y_i-X_i)|\mathcal{F}_{i-1}]\Big].\end{equation*}
Since $(X_i)_{i\geq 1}$ are iid random variables, then by the
strong law of large numbers we have
$\frac{m}{n}\sum_{i=1}^nX_i\stackrel{a.s}{\rightarrow} m\mu_X$.
Via Ces\'aro lemma, we conclude that
$\frac{1}{n}\sum_{i=1}^nZ_{i-1}$ converges almost surely, as $n$
tends to infinity, to $z$. Finally, we prove that last term in the
right side tends to zero, as $n$ tends to infinity. In fact, let
$G_n=\sum_{i=1}^n\Big[\xi_i(Y_i-X_i)-\mathbb{E}[\xi_i(Y_i-X_i)|\mathcal{F}_{i-1}]\Big]$,
then $(G_n,\mathcal{F}_n)$ is a martingale difference sequence
such that
\begin{equation*}\frac{<G>_n}{n}=\frac{1}{n}\sum_{i=1}^n\mathbb{E}[\nabla G_i^2|\mathcal{F}_{i-1}],\end{equation*}
where $\nabla
G_n=G_n-G_{n-1}=\xi_n(Y_n-X_{n})-\mathbb{E}[\xi_n(Y_n-X_{n})|\mathcal{F}_{n-1}]$ and $<G>_n$ denotes the quadratic variation of the martingale.\\
By a simple computation, we have the almost sure convergence of
$\mathbb{E}[\nabla G_i^2|\mathcal{F}_{i-1}]$ to $(mz
(1-z)+m^2z^2)(\sigma_Y^2+\sigma_X^2)$. Therefore, Ces\'aro lemma
ensures that, $\frac{<G>_n}{n}$ converges to $(mz
(1-z)+m^2z^2)(\sigma_Y^2+\sigma_X^2)$ and
$\frac{G_n}{n}\stackrel{a.s}{\longrightarrow} 0$. Thus, for $n$
large enough we have
\begin{equation}\label{T_n-convergence}\frac{T_n}{n}\stackrel{a.s}{\longrightarrow}
m\sqrt{\mu_X}\sqrt{\mu_Y}.\end{equation} In view of Equation
(\ref{T_n-convergence}), we describe the asymptotic behavior of
the  urn composition after $n$ draws. One can write
$\frac{W_n}{n}\frac{W_n}{T_n}\frac{T_n}{n}$ and
$\frac{B_n}{n}\stackrel{a.s}{=}\frac{B_n}{T_n}\frac{T_n}{n}$,
using Equations (\ref{conv-proportion}, \ref{T_n-convergence}) and
Slutsky theorem, we have, as $n$ tends to infinity,
$\frac{W_n}{n}\stackrel{a.s}{\longrightarrow}m\sqrt{\mu_X}\sqrt{\mu_Y}
z$ and  $\frac{B_n}{n}\stackrel{a.s}{\longrightarrow}
m\sqrt{\mu_X}\sqrt{\mu_Y}(1-z)$.\\
\textbf{Proof of claim 2}\\
 Later, we aim to apply Theorem \ref{clt-renlund}. In our model, we have $\gamma_n=\frac{1}{T_n}$, then we need to
control the following asymptotic behaviors
\begin{equation*}
    \lim_{n\rightarrow +\infty}\mathbb{E}[\Big(\frac{n}{T_n}\Big)^2\Delta
    M_{n+1}^2|\mathcal{F}_n]\quad \text{and}\quad
    \lim_{n\rightarrow +\infty}-\frac{n}{T_n}f'(Z_n).
\end{equation*}
In fact, recall that $\frac{n}{T_n}$ converges almost surely to
$\frac{1}{m\sqrt{\mu_X}\sqrt{\mu_Y}}$ and $\mathbb{E}[\Delta
M_{n+1}^2|\mathcal{F}_n]=\mathbb{E}[D_{n+1}^2|\mathcal{F}_n]+\mathbb{E}[D_{n+1}|\mathcal{F}_n]^2$.
Since $\mathbb{E}[D_{n+1}|\mathcal{F}_n]^2$ converges almost
surely to $f(z)^2=0$, we have,
\begin{eqnarray*}\mathbb{E}[D_{n+1}^2|\mathcal{F}_n]&=&\mathbb{E}\Big[Z_n^2(X_{n+1}-Y_{n+1})^2-2Z_nX_{n+1}+X_{n+1}|\mathcal{F}_n\Big]
\mathbb{E}[\xi_{n+1}^2|\mathcal{F}_n]+m^2\mathbb{E}(X^2)\\&&+2m^2\Bigl(Z_n^2(\mathbb{E}(X^2)-\mu_X\mu_Y)-Z_n\mathbb{E}(X^2)\Bigr).\end{eqnarray*}
Using the fact that
$\mathbb{E}[\xi_{n+1}^2|\mathcal{F}_n]=mZ_n(1-Z_n)\frac{T_n-m}{T_n-1}+m^2Z_n^2$
and that $Z_n$ converges almost surely to $z$, we conclude that
$\mathbb{E}[D_{n+1}^2|\mathcal{F}_n]$ converges almost surely to
$G(z)>0.$  Applying Theorem \ref{clt-renlund}, we obtain the
following
\begin{equation}\sqrt{n}(Z_n-z)\stackrel{\mathcal{L}}{\longrightarrow}\mathcal{N}\Big(0,\frac{G(z)}{3m^2\mu_X\mu_Y}\Big). \end{equation}
But, we can write
$\frac{W_n-zT_n}{\sqrt{n}}=\sqrt{n}\bigl(\frac{W_n}{T_n}-z\bigr)\frac{T_n}{n}$.
Thus, it is enough to use Slutsky theorem to conclude the proof.

\end{proof}

\begin{proof}[Proof of Theorem \ref{thmXYself}] Consider
the urn model defined in (\ref{recurrence}) with $Q_n=
\begin{pmatrix}
  X_n &  0\\
  0& Y_n \\
\end{pmatrix}$.  The process of the
urn satisfies the following recursions:
\begin{equation}\label{recurence-self1}
W_{n+1}=W_n+X_{n+1}\xi_{n+1}\quad \text{and} \quad
T_{n+1}=T_n+mY_{n+1}+\xi_{n+1}(X_{n+1}-Y_{n+1}).\end{equation}
\begin{lem}\label{algo}
If $\mu_X\neq \mu_Y$, the proportion of white balls in the urn
after $n$ draws satisfies the stochastic algorithm defined by
(\ref{eq:algo_sto}) where $\gamma_n=\frac{1}{T_n}$,
$f(x)=m(\mu_Y-\mu_X)x(x-1)$ and $\Delta
M_{n+1}=D_{n+1}-\mathbb{E}[D_{n+1}|\mathcal{F}_n]$ with
$D_{n+1}=\xi_{n+1}(Z_n(Y_{n+1}-X_{n+1})+X_{n+1})-mZ_nY_{n+1}$.\\
\end{lem}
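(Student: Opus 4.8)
The plan is to derive the one-step recursion for $Z_n=W_n/T_n$ from (\ref{recurence-self1}) and then verify the four requirements of Definition \ref{def-algo}, in the same manner as for the analogous lemma in the proof of Theorem \ref{thmXYopp}. Writing $Z_{n+1}-Z_n=(W_{n+1}T_n-W_nT_{n+1})/(T_nT_{n+1})$ and substituting the two recursions (\ref{recurence-self1}), a short computation gives
\[
Z_{n+1}-Z_n=\frac{W_{n+1}T_n-W_nT_{n+1}}{T_nT_{n+1}}=\frac{1}{T_{n+1}}\Big(\xi_{n+1}\big(Z_n(Y_{n+1}-X_{n+1})+X_{n+1}\big)-mZ_nY_{n+1}\Big)=\frac{D_{n+1}}{T_{n+1}}.
\]
Hence, with $\gamma_{n+1}=1/T_{n+1}$, it only remains to check that $\mathbb{E}[D_{n+1}\mid\mathcal{F}_n]=f(Z_n)$, which is exactly what forces the definition $\Delta M_{n+1}=D_{n+1}-\mathbb{E}[D_{n+1}\mid\mathcal{F}_n]$ (so that $D_{n+1}=f(Z_n)+\Delta M_{n+1}$). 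Since $X_{n+1}$ and $Y_{n+1}$ are independent of $\mathcal{F}_n$ and, conditionally on $\mathcal{F}_n$, independent of the draw $\xi_{n+1}$, and since $\mathbb{E}[\xi_{n+1}\mid\mathcal{F}_n]=mZ_n$ by the hypergeometric mean, I obtain $\mathbb{E}[D_{n+1}\mid\mathcal{F}_n]=mZ_n\big(\mu_X+Z_n(\mu_Y-\mu_X)\big)-mZ_n\mu_Y=m(\mu_Y-\mu_X)Z_n(Z_n-1)=f(Z_n)$, as claimed. Adaptedness of $Z_n,\gamma_n,\Delta M_n$ and the range $Z_n\in[0,1]$ are immediate from $0\le W_n\le T_n$.

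For condition (i), (\ref{recurence-self1}) together with $0\le\xi_{n+1}\le m$ gives $m\min(X_{n+1},Y_{n+1})\le T_{n+1}-T_n\le m\max(X_{n+1},Y_{n+1})$; summing over $n$ and using the strong law of large numbers for $\sum\max(X_i,Y_i)$ (for the lower bound on $1/T_n$) and the bound $T_n\ge mn\min_{1\le i\le n}(X_i,Y_i)$ (for the upper bound), one gets $c_l,c_u$ with $c_l/n\le\gamma_n\le c_u/n$, exactly as in the argument following Theorem \ref{thmXYopp}. Condition (iii) is immediate: $|f(Z_n)|=m|\mu_Y-\mu_X|\,|Z_n(Z_n-1)|\le\tfrac14\,m|\mu_Y-\mu_X|=:K_f$ since $Z_n\in[0,1]$.

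For condition (ii) I would use the $L^2$ variant permitted by the remark after Theorem \ref{th:renlund}: $\mathbb{E}[\Delta M_{n+1}^2\mid\mathcal{F}_n]\le\mathbb{E}[D_{n+1}^2\mid\mathcal{F}_n]$, and from $|\xi_{n+1}|\le m$ and $Z_n\in[0,1]$ one has $|D_{n+1}|\le 2m(X_{n+1}+Y_{n+1})$, whence $\mathbb{E}[D_{n+1}^2\mid\mathcal{F}_n]\le 8m^2(\sigma_X^2+\mu_X^2+\sigma_Y^2+\mu_Y^2)=:K_u<\infty$ because $X$ and $Y$ have finite variances. The only step that deserves genuine care — and the one I expect to be the main, if modest, obstacle — is (iv): unlike $\gamma_n$, the factor $\gamma_{n+1}=1/T_{n+1}$ is not $\mathcal{F}_n$-measurable, so it cannot simply be pulled out of the conditional expectation. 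I would write $\gamma_{n+1}=\gamma_n-(T_{n+1}-T_n)/(T_nT_{n+1})$; the first term contributes $\gamma_n\,\mathbb{E}[\Delta M_{n+1}\mid\mathcal{F}_n]=0$, and for the second, using $0<T_{n+1}-T_n\le m\max(X_{n+1},Y_{n+1})$, $T_{n+1}\ge T_n$, and Cauchy--Schwarz with the second-moment estimate above, one gets $\big|\mathbb{E}[\gamma_{n+1}\Delta M_{n+1}\mid\mathcal{F}_n]\big|\le K_e/T_n^2=K_e\gamma_n^2$. Everything else is bookkeeping; once (i)--(iv) are in place, $Z_n$ satisfies (\ref{eq:algo_sto}) with the stated $\gamma_n$, $f$ and $\Delta M_{n+1}$, which is Lemma \ref{algo}.
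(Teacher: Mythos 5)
Your proof is correct and follows the same route as the paper: verify conditions (i)--(iv) of Definition \ref{def-algo} for $Z_n$ with the stated $\gamma_n$, $f$ and $\Delta M_{n+1}$ (the paper omits the explicit derivation of the recursion $Z_{n+1}-Z_n=D_{n+1}/T_{n+1}$ and of $\mathbb{E}[D_{n+1}\mid\mathcal{F}_n]=f(Z_n)$, which you supply). Your treatment of (iv) is in fact more careful than the paper's: the paper simply replaces $1/T_{n+1}$ by $1/T_n$ inside the conditional expectation, which is not legitimate since $\gamma_{n+1}$ is not $\mathcal{F}_n$-measurable and $\Delta M_{n+1}$ changes sign, whereas your decomposition $\gamma_{n+1}=\gamma_n-(T_{n+1}-T_n)/(T_nT_{n+1})$ combined with Cauchy--Schwarz yields the required bound $K_e\gamma_n^2$ rigorously.
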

\begin{proof}
We check that, if $\mu_X\neq\mu_Y$, the conditions of definition
\ref{def-algo} hold. Indeed, \begin{description} \item[(i)] as
$T_n=T_0+m\sum_{i=1}^nY_i+\sum_{i=1}^n\xi_i(X_i-Y_i)$,
 then via the strong law of large numbers we have $|\frac{T_n}{n}|\leq
 m\mu_Y+m\mu_{|X-Y|}+1$.
On the other hand, we have $T_n\geq \min_{1\leq i\leq n}(X_i,Y_i)
m n$, thus,\begin{equation*}\frac{1}{(m\mu_Y+m\mu_{|X-Y|})n}\leq
\frac{1}{T_n}\leq \frac{1}{\displaystyle\min_{1\leq i\leq
n}(X_i,Y_i) m n},\end{equation*}

 \item[(ii)]$\mathbb{E}[\Delta
M_{n+1}^2|\mathcal{F}_n]\leq
(2m+m^2)(4\mu_{X^2}+\mu_{Y^2})+3m^2\mu_{Y^2}+2m^2\mu_X+2m^2\mu_X\mu_Y+4m^2(\mu_X-\mu_Y)^2=K_u,$\\
\item
[(iii)]$|f(Z_n)|=|m(\mu_Y-\mu_X)Z_n(Z_n-1)|\leq 2m |\mu_Y-\mu_X|=K_f,$\\
\item[(iv)] $\mathbb{E}[\gamma_{n+1}\Delta
M_{n+1}|\mathcal{F}_n]\leq \frac{1}{T_n}\mathbb{E}[\Delta
M_{n+1}|\mathcal{F}_n]=0=K_e.$
\end{description}
\end{proof}

\begin{prop}\label{prop-self}
The proportion of white balls in the urn after $n$ draws, $Z_n$,
satisfies almost surely\\

$\displaystyle\lim_{n\rightarrow \infty}Z_n=\left\{%
\begin{array}{ll}
     1, & \hbox{if $\mu_X>\mu_Y$;} \\
     0, & \hbox{if $\mu_X<\mu_Y$;} \\
     \tilde Z_\infty, & \hbox{if $\mu_X=\mu_Y$,} \\
\end{array}%
\right $\\
 where $\tilde Z_\infty$ is a positive random
variable.
\end{prop}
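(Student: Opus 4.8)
\textbf{The plan} is to split off the critical case $\mu_X=\mu_Y$, where $f\equiv0$ and Lemma~\ref{algo} is vacuous, from the non‑degenerate cases $\mu_X\neq\mu_Y$, where the machinery of Section~\ref{Proofs} applies directly. \emph{Cases $\mu_X\neq\mu_Y$.} By Lemma~\ref{algo}, $Z_n$ solves \eqref{eq:algo_sto} with $f(x)=m(\mu_Y-\mu_X)x(x-1)$, so $Q_f=\{0,1\}$ and Theorem~\ref{th:renlund} already yields that $\lim_n Z_n$ exists almost surely in $\{0,1\}$. Since $f'(x)=m(\mu_Y-\mu_X)(2x-1)$, one has $f'(0)=m(\mu_X-\mu_Y)$ and $f'(1)=m(\mu_Y-\mu_X)$, so $1$ is the unique stable zero when $\mu_X>\mu_Y$ and $0$ is the unique stable zero when $\mu_X<\mu_Y$. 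It then remains to exclude convergence to the unstable zero. Taking $\mu_X>\mu_Y$ (the other case is symmetric), I would first observe that $W_n$ is non‑decreasing and $W_n\to\infty$ almost surely: since $m\le T_{n+1}-T_n\le m\max(X_{n+1},Y_{n+1})$, $T_n$ is of order $n$, hence $\mathbb{P}(\xi_{k+1}\ge1\mid\mathcal F_k)$ is of order $W_k/T_k$, which stays of order $1/k$ as long as $W_k$ is bounded; the divergence of $\sum1/k$ together with the conditional Borel--Cantelli lemma forces white balls to be drawn infinitely often. On the event $E=\{Z_n\to0\}$ one gets $T_n\sim m\mu_Y n$ (the martingale part of $T_n$ is $o(n)$ by finiteness of the variances, the drift tends to $m\mu_Y$, so Ces\`aro applies), whence $\prod_{k=1}^{n-1}\bigl(1+\tfrac{m\mu_X}{T_k}\bigr)$ grows like $n^{\mu_X/\mu_Y+o(1)}$ with $\mu_X/\mu_Y>1$; since $\mathcal M_n:=W_n\big/\prod_{k=1}^{n-1}\bigl(1+\tfrac{m\mu_X}{T_k}\bigr)$ is a nonnegative martingale, on $E\cap\{\mathcal M_\infty>0\}$ the white count would grow strictly faster than $n$, contradicting $W_n\le T_n=O(n)$. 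Hence $\mathcal M_\infty=0$ on $E$.

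\emph{The main obstacle} is upgrading this to $\mathbb{P}(E)=0$, i.e.\ genuinely ruling out absorption at the unstable equilibrium. I expect to do this either by invoking the sharper form of Theorem~1 in \cite{Renlund1}, namely that the limit of a stochastic approximation is almost surely a \emph{stable} zero once the noise does not degenerate relative to the drift near an unstable zero — which holds here because the rescaled increment $\Delta M_{n+1}/Z_n$ carries the macroscopic upward jump $X_{n+1}\ge1$ produced whenever a white ball is drawn — or, more hands‑on, by a Paley--Zygmund second‑moment estimate showing that after conditioning on a short stretch of white reinforcement the martingale $\mathcal M_n$ satisfies $\mathbb{P}(\mathcal M_\infty>0\mid\mathcal F_N)\ge c>0$, which is incompatible with $\mathcal M_\infty=0$ on $E$.

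\emph{Case $\mu_X=\mu_Y=:\mu$.} Here $f\equiv0$, so I would argue directly from \eqref{recurence-self1}: $\mathbb{E}[W_{n+1}\mid\mathcal F_n]=W_n\bigl(1+\tfrac{m\mu}{T_n}\bigr)$, $\mathbb{E}[B_{n+1}\mid\mathcal F_n]=B_n\bigl(1+\tfrac{m\mu}{T_n}\bigr)$, while $T_n=T_0+m\sum_{i=1}^nY_i+(\text{an }L^2\text{-martingale})$ gives $T_n\sim m\mu n$ almost surely. Thus $\mathcal M_n:=W_n/\prod_{k=1}^{n-1}(1+\tfrac{m\mu}{T_k})$ and its analogue $\mathcal N_n$ built from $B_n$ are nonnegative martingales, $\prod_{k=1}^{n-1}(1+\tfrac{m\mu}{T_k})\sim c\,n$ almost surely for some constant $c>0$, so $W_n/n\to W_\infty\ge0$ and $B_n/n\to B_\infty\ge0$ with $W_\infty+B_\infty=m\mu$, and $Z_n\to\tilde Z_\infty:=W_\infty/(m\mu)$. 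Positivity of $\tilde Z_\infty$ is again the delicate point: since $W_n/n\le T_n/n$ is $L^2$‑bounded, hence uniformly integrable, $\mathbb{E}[W_\infty]=\lim_n\mathbb{E}[W_n]/n>0$, so $\mathbb{P}(W_\infty>0)>0$; to push this to probability one I would use the Markov property, writing $\mathbb{P}(W_\infty=0\mid\mathcal F_n)=g(W_n,B_n)$ with $g$ non‑increasing in its first argument, and note that on $\{W_\infty=0\}$ one has $W_n\to\infty$ while $g(W_n,B_n)\to\I{W_\infty=0}$, which is incompatible with the monotonicity of $g$ and the positive‑probability fact just obtained unless $\mathbb{P}(W_\infty=0)=0$. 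Throughout, the convergence to $\{0,1\}$ and the stability computation are immediate; the genuine difficulty is exactly these non‑absorption / positivity statements.
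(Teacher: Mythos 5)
Your overall strategy matches the paper's: for $\mu_X\neq\mu_Y$ you invoke Lemma~\ref{algo} and Theorem~\ref{th:renlund} and then argue stability, and for $\mu_X=\mu_Y$ you pass to the normalized martingales $W_n/\prod_k(1+\tfrac{m\mu}{T_k})$ (the paper instead observes directly that $Z_n$ itself is a bounded nonnegative martingale in the critical case, but it uses your normalized martingales a few lines later to get $W_n/n\to W_\infty$, so the two routes are essentially interchangeable). The computation of the stable zero, the identification $Q_f=\{0,1\}$, and the Ces\`aro/martingale estimate $T_n\sim m\mu_Y n$ on $\{Z_n\to 0\}$ are all correct and in the spirit of the paper.

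However, the two points you flag as "the main obstacle" and "the delicate point" are genuine gaps that your proposal does not close. (1) For $\mu_X\neq\mu_Y$, Theorem~\ref{th:renlund} as stated only gives $\lim Z_n\in Q_f$ a.s.\ and $\mathbb P(Z_n\to p)>0$ for the stable $p$; it does \emph{not} exclude the unstable zero. Your martingale argument correctly shows $\mathcal M_\infty=0$ on $E=\{Z_n\to 0\}$, but that is consistent with $\mathbb P(E)>0$, and the two proposed upgrades (a "sharper form" of Renlund's non-convergence result, or a Paley--Zygmund estimate) are named but not executed; in particular the non-degeneracy of the noise near $0$ would have to be verified, which is not automatic since $\Delta M_{n+1}\to 0$ as $Z_n\to 0$. (2) For $\mu_X=\mu_Y$, the step from $\mathbb P(W_\infty>0)>0$ to $\mathbb P(W_\infty>0)=1$ via monotonicity of $g(w,b)=\mathbb P(W_\infty=0\mid W_0=w,B_0=b)$ does not produce the claimed contradiction: on $\{W_\infty=0\}$ both $W_n$ and $B_n$ tend to infinity, and monotonicity in the first argument alone is compatible with $g(W_n,B_n)\to 1$ (e.g.\ if $g$ depends on the ratio $B_n/W_n$). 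In fairness, the paper's own proof is equally incomplete at exactly these two points --- it simply asserts convergence to the stable zero and asserts that the martingale limit $\tilde Z_\infty$ is positive --- and its claim that $Z_n$ is a martingale in the critical case is itself delicate because $T_{n+1}$ in the increment $P_{n+1}/T_{n+1}$ is not $\mathcal F_n$-measurable. So your write-up correctly locates the difficulties, but as a proof it leaves the same holes the paper does, and the specific repairs you sketch would not go through as stated.
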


\begin{proof}[Proof of Proposition \ref{prop-self}]

Recall that, if $\mu_X\neq \mu_Y$, $Z_n$ satisfies the stochastic
algorithm defined in Lemma \ref{algo}. As the function $f$ is
continuous, by Theorem \ref{clt-renlund} we conclude that $Z_n$
converges almost surely to the stable zero of the function $h$
with a negative derivative,
which is $1$ if $\mu_X>\mu_Y$ and $0$ if $\mu_X<\mu_Y.$\\
 In the case  when $\mu_X=\mu_Y$, we have
$Z_{n+1}=Z_n+\frac{P_{n+1}}{T_{n+1}}$, where
$P_{n+1}=X_{n+1}\xi_{n+1}-Z_n\bigl(mY_{n+1}+\xi_{n+1}(X_{n+1}-Y_{n+1})\bigr)$.
Since $\mathbb{E}[P_{n+1}|\mathcal{F}_n]=0$, then $Z_n$ is a
positive
 martingale which converges almost surely to a positive random variable $\tilde
Z_\infty$.\\
As a consequence, we have
\begin{cor} The total number of balls in the urn, $T_n$,
satisfies as $n$ tends to infinity

 if $\mu_X\geq \mu_Y$
\begin{equation*}
\frac{T_n}{n}\stackrel{a.s}{\longrightarrow}m\mu_X.
\end{equation*}

\end{cor}
\begin{proof}
In fact, let
$M_n=\sum_{i=1}^n\xi_i(X_i-Y_i)-\mathbb{E}[\xi_i(X_i-Y_i)|\mathcal{F}_{i-1}],$
we have
\begin{eqnarray*}\frac{T_n}{n}&=&\frac{T_0}{n}+\frac{m}{n}\sum_{i=1}^nY_i+\frac{1}{n}\sum_{i=1}^n\xi_i(X_i-Y_i)\\
&=&\frac{T_0}{n}+\frac{m}{n}\sum_{i=1}^nY_i+
\frac{m(\mu_X-\mu_Y)}{n}\sum_{i=1}^nZ_{i-1}+\frac{M_n}{n}.
\end{eqnarray*}
As it was proved in the previous theorem, we show that, as $n$
tends to infinity, we have
$\frac{M_n}{n}\stackrel{a.s}{\longrightarrow} 0$. Recall that, if
$\mu_X>\mu_X$ , $Z_n$ converges almost surely to $1$. Then, using
Ces\'aro lemma, we obtain the limits requested. If $\mu_X=\mu_Y$,
we have $\frac{1}{n}\sum_{i=1}^nY_i$ converges to $\mu_Y$.
\end{proof}

Using the results above, the convergence of the normalized number
of white balls follows immediately. Indeed, if $\mu_X>\mu_Y$, we
have, as $n$ tends to infinity,
\[\frac{W_n}{n}=\frac{W_n}{T_n}\frac{T_n}{n}\stackrel{a.s}{\longrightarrow}m\mu_X,\]
Let $\tilde
G_n=\Bigl(\prod_{i=1}^{n-1}(1+\frac{m\mu_Y}{T_i})\Bigr)^{-1}B_n,$
then $(\tilde G_n,\mathcal{F}_n)$ is a positive martingale. There
exists a positive number $A$ such that
$\prod_{i=1}^{n-1}(1+\frac{m\mu_Y}{T_i})\simeq  A
   n^{\rho}$. Then, as $n$ tends to infinity we have
   \begin{equation*} \frac{B_n}{n^\rho}\stackrel{a.s}{\rightarrow} B_{\infty},\end{equation*}
where $B_\infty$ is a positive random variable.\\
If $\mu_X=\mu_Y$, the sequences
$\Bigl(\prod_{i=1}^{n-1}(1+\frac{m\mu_X}{T_i})\Bigr)^{-1}W_n$ and
$\Bigl(\prod_{i=1}^{n-1}(1+\frac{m\mu_Y}{T_i})\Bigr)^{-1}B_n$ are
$\mathcal{F}_n$ martingales such that
$\Bigl(\prod_{i=1}^{n-1}(1+\frac{m\mu_X}{T_i})\Bigr)^{-1}\simeq B
   n,$ where $B>0$, then, as $n$ tends to infinity,
    we have
    \begin{equation*}\frac{W_n}{n}\stackrel{a.s}{\rightarrow}
   W_{\infty} \quad \text{and}\quad \frac{B_n}{n}\stackrel{a.s}{\rightarrow}  \tilde B_{\infty},\end{equation*}
where $W_{\infty}$ and $\tilde B_{\infty}$ are positive random
variables satisfying $ \tilde B_{\infty}=m\mu_X-W_{\infty}.$

\end{proof}
\begin{rmq}
The case when $\mu_X<\mu_Y$ is obtained by interchanging the
colors. In fact we have the following results:
\begin{equation*}T_n\stackrel{a.s}{=}m\mu_Y n+o(n),\quad W_n=\tilde W_\infty n^\sigma+o(n)\quad \text{and} \quad B_n=m\mu_Yn+o(n),\end{equation*}
where $\tilde W_\infty$ is a positive random variable and
$\sigma=\frac{\mu_X}{\mu_Y}.$
\end{rmq}

\end{document}